\documentclass[oneside, 12pt]{amsart}

\usepackage[utf8]{inputenc}
\usepackage[margin=1in]{geometry}
\usepackage{amsthm}
\usepackage{enumitem}  
\usepackage[OT2,T1]{fontenc}
\usepackage{amscd, amssymb, enumitem, amsmath,mathrsfs, amsfonts, amsaddr}
\usepackage{url}
\usepackage{tikz}
\usepackage{fullpage}
\usepackage{microtype}
\usetikzlibrary{decorations.pathreplacing}
\usepackage[backref=page]{hyperref}
\usepackage{hyperref}
\usepackage[utf8]{inputenc}
\usepackage[main=english,russian]{babel}

\DeclareSymbolFont{cyrletters}{OT2}{wncyr}{m}{n}
\DeclareMathSymbol{\Sha}{\mathalpha}{cyrletters}{"58}
\newcommand{\widebar}[1]{\mkern 1.5mu\overline{\mkern-1.5mu#1\mkern-1.5mu}\mkern 1.5mu}

\setlength{\textwidth}{17cm}
\setlength{\textheight}{9in}
\setlength{\oddsidemargin}{-0.0in}

\usepackage[utf8]{inputenc}

\newtheorem{theorem}{Theorem}[section]
\newtheorem{lemma}[theorem]{Lemma}
\newtheorem{proposition}[theorem]{Proposition}
\newtheorem*{proposition*}{Proposition}
\newtheorem{corollary}[theorem]{Corollary}
\newtheorem*{questiona*}{Question A}
\newtheorem*{questionb*}{Question B}
\newtheorem*{theorem*}{Theorem}
\newtheorem*{question*}{Question}

\theoremstyle{definition}

\newtheorem{example}[theorem]{Example}
\newtheorem{conjecture}[theorem]{Conjecture}

\newtheorem{remark}[theorem]{Remark}

\newtheorem*{acknowledgement}{Acknowledgement}

\theoremstyle{remark}

\title{An analogue of a conjecture of Rasmussen and Tamagawa for abelian varieties over function fields}
\author{Mentzelos Melistas}
\address{Charles University, Faculty of Mathematics and Physics, Department of
Algebra, Sokolov\-sk\' a 83, 18600 Praha~8, Czech Republic}

\address{University of Twente, Department of Applied Mathematics, Drienerlolaan 5, 7522 NB Enschede, The Netherlands}

\date{\today}

\begin{document}

\maketitle

\begin{abstract}
   Let $L$ be a number field and let $\ell$ be a prime number. Rasmussen and Tamagawa conjectured, in a precise sense, that abelian varieties whose field of definition of the $\ell$-power torsion is both a pro-$\ell$ extension of $L(\mu_\ell)$ and unramified away from $\ell$ are quite rare. In this paper, we formulate an analogue of the Rasmussen--Tamagawa conjecture for non-isotrivial abelian varieties defined over function fields. We provide a proof of our analogue in the case of elliptic curves. In higher dimensions, when the base field is a subfield of the complex numbers, we show that our conjecture is a consequence of the uniform geometric torsion conjecture. Finally, using a theorem of Bakker and Tsimerman we also prove our conjecture unconditionally for abelian varieties with real multiplication.
\end{abstract}

\section{introduction}

Let $L$ be a number field and let $\ell$ be a prime number. Denote by $\Tilde{L_{\ell}}$ the maximal pro-$\ell$ extension of $L(\mu_\ell)$ which is unramified away from $\ell$, where $\mu_{\ell}$ is the group of $\ell$-th roots of unity in a fixed algebraic closure $\widebar{L}$ of $L$. Given an integer $d \geq 1$, a prime $\ell$, and a number field $L$, we also denote by $\mathscr{A}(L,d, \ell)$ the set of $L$-isomorphism classes of $d$-dimensional abelian varieties $A/L$ which satisfy the following inclusion $$L(A[\ell^{\infty}]) \subseteq\Tilde{L_{\ell}}.$$

The motivation to consider the fields $\Tilde{L_{\ell}}$ comes from Galois representations arising from $\mathbb{P}_{01\infty}^1:=\mathbb{P}_{\widebar{\mathbb{Q}}}^1 \setminus \{ 0, 1, \infty \}$. More precisely, let $\Phi: \text{Gal}(\widebar{L}/L) \longrightarrow \text{Out}(\pi_1^{\ell}(\mathbb{P}_{01\infty}^1))$ be the natural outer Galois representation of the pro-$\ell$ fundamental group of $\mathbb{P}_{01\infty}^1$ and let $M:=\widebar{L}^{\text{ker}(\Phi)}$ be the fixed field of its kernel. Anderson and Ihara in \cite{andersonihara} showed the inclusion $M \subseteq \Tilde{L_{\ell}}$. The following question, posed by Ihara in \cite{ihara}, is still open; For $L=\mathbb{Q}$, is it true that $M=\Tilde{L_{\ell}}$? A natural source of sub-extensions of $M$ comes from abelian varieties that belong to $\mathscr{A}(L,d, \ell)$. Examples of primes $\ell$ and abelian varieties in $\mathscr{A}(L,d, \ell)$ have been found in \cite{andersonihara}, \cite{r04}, \cite{pr07}, and \cite{rt08}. However, it seems that such examples are actually quite rare. This motivated Rasmussen and Tamagawa, in \cite{rt08}, to formulate the following conjecture.

\begin{conjecture}\label{conjecture1}(Rasmussen--Tamagawa)
For any number field $L$ and integer $d \geq 1$ there exists a number $N=N(L,d)$ such that $\mathscr{A}(L,d, \ell)= \emptyset $ for $\ell >N$.
\end{conjecture}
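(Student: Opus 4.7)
The plan is to combine two structural constraints that membership in $\mathscr{A}(L,d,\ell)$ places on an abelian variety $A/L$, and then to argue that these constraints become jointly incompatible once $\ell$ is large compared to $L$ and $d$.

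The first constraint is global. Since $\Tilde{L_{\ell}}$ is unramified at every place of $L$ not lying above $\ell$, the inclusion $L(A[\ell^{\infty}]) \subseteq \Tilde{L_{\ell}}$ forces the pro-$\ell$ Tate module $T_\ell A$ to be unramified away from $\ell$, so by the N\'eron--Ogg--Shafarevich criterion $A$ has good reduction at every such place. The second constraint is representation-theoretic: because $\mathrm{Gal}(\Tilde{L_{\ell}}/L(\mu_{\ell}))$ is pro-$\ell$, the image of the mod-$\ell$ representation $\bar\rho_{A,\ell}\colon G_L \to \mathrm{GL}_{2d}(\mathbb{F}_\ell)$ after restriction to $G_{L(\mu_\ell)}$ is an $\ell$-group, hence conjugate into the upper unipotent subgroup, and consequently $\bar\rho_{A,\ell}(G_L)$ lies in the normalizer of this Sylow $\ell$-subgroup, which is a Borel subgroup $B \subset \mathrm{GL}_{2d}(\mathbb{F}_\ell)$. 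The diagonal characters of $\bar\rho_{A,\ell}$ factor through $\mathrm{Gal}(L(\mu_\ell)/L)$, so each has order dividing $\ell-1$, and the Weil pairing pins down the products of ``dual'' pairs to the cyclotomic character. Finally, at each $v\mid\ell$, Raynaud's theorem on finite flat group schemes (or Fontaine--Laffaille theory, depending on the ramification of $v$) further restricts the diagonal characters on inertia to products of fundamental characters of controlled level.

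With these constraints in hand, the argument splits by dimension. For each fixed $\ell$, Faltings' finiteness theorem already bounds $\#\mathscr{A}(L,d,\ell)$, so the genuine task is to produce uniformity in $\ell$. For $d=1$ and $L=\mathbb{Q}$, the Borel structure gives a rational cyclic $\ell$-isogeny on $E/\mathbb{Q}$, and Mazur's theorem bounds $\ell$ outright; the residual character conditions then dispose of the finitely many remaining primes. For $d=1$ over a general number field $L$, I would substitute the uniform isogeny bounds of Momose, Kraus, and the Bilu--Parent--Rebolledo type results, tightened by the knowledge that the isogeny character is $\chi_\ell^{a}$ twisted by a character of bounded order. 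For $d\ge 2$, my plan would be to feed the Borel structure together with good reduction outside $\ell$ into the isogeny bounds of Masser--W\"ustholz or Gaudron--R\'emond, in order to produce a Faltings height bound for $A$ that is independent of $\ell$, and then invoke Northcott to conclude that $\bigcup_{\ell>N}\mathscr{A}(L,d,\ell)$ is empty.

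The main obstacle, and the reason the conjecture remains open, lies precisely in this higher-dimensional uniformity step. The isogeny-theoretic bounds one would like to use themselves grow with the Faltings height of $A$, so extracting an $\ell$-free height bound from the Borel structure appears circular without additional input. Approaches in the literature therefore either assume auxiliary hypotheses (GRH, in the original Rasmussen--Tamagawa work), restrict to special classes of $A$ where Shimura-variety techniques force a height bound (such as the real-multiplication case handled in this paper via Bakker--Tsimerman), or, as the present paper does, change the base and import an analogue of the uniform torsion conjecture to play the role of a height bound. I would therefore expect any proof along these lines to yield only conditional uniformity in the general higher-dimensional number-field setting, which is why the paper restricts its unconditional claims to elliptic curves, to real multiplication, and to the function-field analogue.
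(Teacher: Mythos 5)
Conjecture \ref{conjecture1} is not proved in the paper: it is stated as the motivating open conjecture of Rasmussen and Tamagawa, and the paper only cites known partial results (the $d=1$ cases over $\mathbb{Q}$ and small-degree $L$ in \cite{rt08}, the GRH-conditional and semi-stable cases in \cite{rt17}, the CM cases in \cite{bourdon15} and \cite{lombardo18}) before turning to its actual subject, the function-field analogue. Your write-up correctly recognizes this: you do not claim a proof, you accurately reconstruct the two structural constraints (good reduction away from $\ell$ via N\'eron--Ogg--Shafarevich, and the Borel-type filtration on the mod-$\ell$ representation together with the resulting bounded-order diagonal characters) that underpin all the known partial results, you correctly name the tools used in those cases (Mazur and the uniform isogeny results of Momose/Kraus/Bilu--Parent--Rebolledo for $d=1$; Raynaud and Fontaine--Laffaille for inertia at $\ell$), and you accurately locate the obstruction to a full proof in the absence of an $\ell$-independent Faltings-height bound for $d\ge 2$. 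Your closing paragraph also correctly places the present paper's contributions (real multiplication via Bakker--Tsimerman, and the base change to function fields with the uniform geometric torsion conjecture as the substitute height input) within that landscape.

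One small imprecision worth flagging in your second constraint: from the fact that $\bar\rho_{A,\ell}(G_{L(\mu_\ell)})$ is an $\ell$-group you may conjugate it into the upper unipotent subgroup $U$, but you may \emph{not} then conclude that $\bar\rho_{A,\ell}(G_L)$ lies in the normalizer $N_{\mathrm{GL}_{2d}(\mathbb{F}_\ell)}(U)$: the group $\bar\rho_{A,\ell}(G_L)$ only normalizes $\bar\rho_{A,\ell}(G_{L(\mu_\ell)})$, which may be a proper subgroup of $U$, and the normalizer of a proper subgroup of $U$ need not lie in a Borel. The correct route, which is what \cite[Lemma~3]{rt08} actually does, is the iterated fixed-subspace argument: the normal $\ell$-subgroup $H=\bar\rho_{A,\ell}(G_{L(\mu_\ell)})$ has $A[\ell]^H\neq 0$ and this subspace is $G_L$-stable; on it $G_L$ acts through the cyclic prime-to-$\ell$ quotient $\mathrm{Gal}(L(\mu_\ell)/L)$, giving an eigenline; one then passes to the quotient module and repeats, producing a full $G_L$-stable flag. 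The conclusion (a Borel structure, hence a chain of rational isogenies with characters of order dividing $\ell-1$) is the same, but the normalizer-of-a-Sylow shortcut as you stated it is not valid in general.
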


For an integer $d \geq 1$ and a number field $L$, we define $$\mathscr{A}(L,d)= \{ ([A], \ell) \; : \; A \in \mathscr{A}(L,d, \ell) \}.$$ We note that it follows from the Shafarevich conjecture, which is a theorem due to Faltings \cite{faltingsisogeny}, that for fixed $L$, $d$, and $\ell$ as above the set $\mathscr{A}(L,d, \ell)$ is finite. Therefore, Conjecture \ref{conjecture1} is equivalent to the following statement; For any number field $L$ and integer $d \geq 1$ the set $\mathscr{A}(L,d)$ is finite.

Conjecture \ref{conjecture1} has been proven in many cases. First, in \cite{rt08}, Conjecture \ref{conjecture1} is proved in the case where $d=1$ and $L=\mathbb{Q}$, as well as in the case where $d=1$ and $L/\mathbb{Q}$ is a quadratic extension such that $K$ is not an imaginary quadratic field of class number $1$. Moreover, Rasmussen and Tamagawa \cite{rt17} proved Conjecture \ref{conjecture1} conditionally on the Generalized Riemann Hypothesis, unconditionally for all semi-stable abelian varieties, and unconditionally for $d=1$ when $L/\mathbb{Q}$ has degree $2$, $3$, or when $L/\mathbb{Q}$ is Galois of exponent $3$. Bourdon \cite{bourdon15}, for CM elliptic curves, and more generally Lombardo \cite{lombardo18}, for CM abelian varieties, proved a uniform version of Conjecture \ref{conjecture1}. Finally, a Drinfeld module analogue has been proven by Okumura \cite{okumura}.

In this paper we are interested in formulating an analogue of Conjecture \ref{conjecture1} for abelian varieties over function fields. More precisely let $k$ be a perfect field of characteristic $p\geq 0$ and let $C/k$ be a smooth, projective, and geometrically connected curve over $k$. We denote by $K$ the function field $k(C)$ of the curve $C/k$. We are interested in stating an analogue of Conjecture \ref{conjecture1} over the field $K$. 

Fix an algebraic closure $\widebar{K}$ of $K$. An abelian variety $A/K$ is called {\it isotrivial} if the base change $A_{\widebar{K}}/\widebar{K}$ of $A/K$ to $\widebar{K}$ is {\it constant}, i.e., if there exists an abelian variety $A_0/\widebar{k}$ such that $A_{\widebar{K}} \cong A_0 \times_k \widebar{K}$. Finally, an abelian variety $A/K$ is called {\it non-isotrivial} if it is not isotrivial.

 If $\ell$ is any prime with $\ell \neq p$, then we let $\Tilde{K_{\ell}'}$ be the maximal pro-$\ell$ extension of $K(\mu_\ell)$, where $\mu_{\ell}$ is the group of $\ell$-th roots of unity in a fixed algebraic closure $\widebar{K}$ of $K$. Moreover, we denote by $\mathscr{A}'(K,d, \ell)$ the set of $K$-isomorphism classes of $d$-dimensional non-isotrivial abelian varieties $A/K$ which satisfy the following condition $$K(A[\ell^{\infty}]) \subseteq\Tilde{K_{\ell}'}.$$ 

Our analogue of Conjecture \ref{conjecture1} is the following.

\begin{conjecture}\label{conjecturefunctionfields}
Let $k$ be a perfect field and let $C/k$ be a smooth, projective, and geometrically connected curve with function field $K=k(C)$. Then for any integer $d \geq 1$ there exists a number $N=N(K,d)$ such that $\mathscr{A}'(K,d, \ell) = \emptyset$ for $\ell > N$. 
\end{conjecture}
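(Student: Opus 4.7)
The plan is to translate the inclusion $K(A[\ell^{\infty}]) \subseteq \Tilde{K_{\ell}'}$ into a strong constraint on the mod-$\ell$ Galois representation $\bar\rho \colon G_K \to \mathrm{GL}_{2d}(\mathbb{F}_\ell)$ of $A$, extract from it a $K$-rational $\ell$-isogeny out of $A$, and then rule such an isogeny out for all sufficiently large $\ell$. The representation-theoretic step is the same in every case; only the obstruction argument differs between $d = 1$ and $d \geq 2$.

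First I would observe that the hypothesis forces $\bar\rho(G_{K(\mu_\ell)})$ to be a finite $\ell$-group inside $\mathrm{GL}_{2d}(\mathbb{F}_\ell)$, and therefore unipotent. Its fixed subspace $V := A[\ell]^{G_{K(\mu_\ell)}}$ is consequently nonzero and $G_K$-stable, and $G_K/G_{K(\mu_\ell)}$, which injects into $\mathbb{F}_\ell^{\times}$ via the mod-$\ell$ cyclotomic character $\chi_\ell$, acts semisimply on $V$ because its order is prime to $\ell$. Thus $V$ decomposes as a direct sum of $G_K$-characters of the form $\chi_\ell^{a}$, and in particular $A$ admits a $K$-rational $\ell$-isogeny whose kernel is a cyclotomic twist of $\mathbb{Z}/\ell$.

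For $d = 1$ such a cyclic $\ell$-isogeny corresponds to a $k$-morphism $\phi_\ell \colon C \to X_0(\ell)_k$; non-isotriviality of $E/K$ forces $\phi_\ell$ to be non-constant, because the composition with $X_0(\ell) \to X(1)$ is the $j$-invariant of $E$, which is transcendental over $k$. Since $g(X_0(\ell)) \sim \ell/12$ eventually exceeds $g(C)$, the Riemann--Hurwitz inequality $2g(C) - 2 \geq \deg(\phi_\ell)\bigl(2g(X_0(\ell)) - 2\bigr)$ then forces $\deg(\phi_\ell) = 0$, a contradiction. Taking $N(K,1)$ to be a prime beyond which $g(X_0(\ell)) > g(C)$ finishes the elliptic case; in positive characteristic one works with $\ell \neq p$ and, if necessary, factors off the inseparable part of $\phi_\ell$ before applying Riemann--Hurwitz. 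For $d \geq 2$ the same fixed subspace $V$ furnishes in particular a $K(\mu_\ell)$-rational $\ell$-torsion point of $A$; the uniform geometric torsion conjecture (when $k \subseteq \mathbb{C}$) bounds $|A(K(\mu_\ell))[\ell]|$ independently of $\ell$ for a fixed non-isotrivial $d$-dimensional $A/K$, which is incompatible with the presence of a point of order $\ell$ for $\ell \gg 0$. For abelian varieties with real multiplication, the Bakker--Tsimerman bound supplies the same uniform input unconditionally.

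The main obstacle is the higher-dimensional case: because $[K(\mu_\ell):K]$ is unbounded in $\ell$, one cannot simply invoke an off-the-shelf finiteness statement for torsion over a fixed base field, and a genuine uniform bound in $\ell$-primary towers is needed. This is precisely the role of the uniform geometric torsion conjecture in the conditional statement for general $d$, and of the Bakker--Tsimerman theorem in the unconditional real-multiplication case. Positive characteristic in the elliptic step is a secondary technical point, resolved by restricting to $\ell \neq p$ and separating $\phi_\ell$ into its inseparable and separable factors.
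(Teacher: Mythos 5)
Your outline reconstructs the paper's partial results faithfully: the central move in both is to translate the pro-$\ell$ hypothesis into the unipotence of $\bar\rho(G_{K(\mu_\ell)})$, extract a nonzero $G_K$-stable fixed subspace of $A[\ell]$, and then play the resulting rational torsion/isogeny datum against a uniform bound over a base of fixed genus (using crucially that $K(\mu_\ell)/K$ is constant and so does not raise the genus). For $d\geq 2$ this is precisely the paper's Theorem \ref{torsionconjectureimpliesconjecture} (conditional on Conjecture \ref{uniformgeometrictorsion}) and Theorem \ref{theoremrm} (Bakker--Tsimerman for the RM case), and like the paper you do not obtain the full conjecture unconditionally.

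Where you diverge is the $d=1$ endgame. The paper's proof of Theorem \ref{maintheorem} stops at a $K(\mu_\ell)$-rational point of order $\ell$ (Lemma \ref{lemmatorsionpoint}) and cites Levin's explicit torsion bound over function fields of genus $g$, giving $\ell \leq 6 + \sqrt{1+24g}$. You instead descend further to a $G_K$-stable line in $A[\ell]$ by diagonalizing the prime-to-$\ell$ quotient $G_K/G_{K(\mu_\ell)}$ on the fixed subspace; this is exactly the maneuver the paper defers to Section 4 (Lemma \ref{mainlemma}) for the more general extensions $M_\ell$. You then run the genus argument through $X_0(\ell)$, whose genus grows like $\ell/12$, so your bound on $\ell$ is linear in $g$ and strictly weaker than the paper's square-root bound; replacing $X_0(\ell)$ by $X_1(\ell)$ applied to the $K(\mu_\ell)$-rational torsion point (over the base of the same genus) would recover the sharper estimate. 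Your remark about factoring off the inseparable part of $\phi_\ell$ before applying Riemann--Hurwitz in characteristic $p$ is the correct fix, as is the restriction $\ell \neq p$. So the proposal is correct; it is a mild variant of the paper's argument that trades the appeal to Levin for an explicit modular-curve genus count and, as stated, yields a quantitatively weaker constant in the $d=1$ case.
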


We note in contrast to the number field case for each prime $\ell$ the set $\mathscr{A}'(K,d, \ell)$ is not necessarily finite. This is because we do not impose any ramification requirements in the definition of $\Tilde{K_{\ell}'}$ as in the definition of $\Tilde{L_{\ell}}$. For every prime $\ell$, which is different from the characteristic of $K$, one could choose a place $\mathfrak{P}_{\ell}$ of $K$ and require that $\Tilde{K_{\ell}'}$ is in addition unramified away from $\mathfrak{P}_{\ell}$. We do not require this ramification condition in our case because in our theorem below such a restrictive condition is not necessary.

Our main theorem in this article is the following theorem, which provides proof for Conjecture \ref{conjecturefunctionfields} in dimension $1$.

\begin{theorem}\label{maintheorem}
Let $k$ be a perfect field and let $C/k$ be a smooth, projective, and geometrically connected curve of genus $g$ with function field $K=k(C)$. Then we have that $\mathscr{A}'(K,1, \ell) = \emptyset$ for every $\ell > 6 +\sqrt{1+24g}$. 
\end{theorem}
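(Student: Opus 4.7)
The plan is to convert the pro-$\ell$ hypothesis into the existence of a rational $\ell$-torsion point over a small extension, and then to obstruct such a point using the genus of the modular curve $X_1(\ell)$.

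First I would unpack the hypothesis at the level of mod $\ell$ Galois representations. The condition $K(E[\ell^{\infty}]) \subseteq \widetilde{K_{\ell}'}$ says that the image of $G_{K(\mu_\ell)}$ in $\mathrm{Aut}(T_\ell E) \cong \mathrm{GL}_2(\mathbb{Z}_\ell)$ is pro-$\ell$, so in particular its image in $\mathrm{GL}_2(\mathbb{F}_\ell)$ is an $\ell$-group. The Sylow $\ell$-subgroup of $\mathrm{GL}_2(\mathbb{F}_\ell)$ has order $\ell$ and is conjugate to the unipotent upper triangular subgroup, so the image fixes a line pointwise. This produces a point $P \in E(K(\mu_\ell))$ of order $\ell$.

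Next I would replace the base by its cyclotomic extension and interpret $(E,P)$ modularly. Because $C$ is geometrically connected, $k$ is algebraically closed in $K$, so $K$ and $k(\mu_\ell)$ are linearly disjoint over $k$, and $K(\mu_\ell)$ is the function field of the smooth projective, geometrically connected curve $C' := C_{k(\mu_\ell)}$ of genus $g$. The elliptic curve $E/K(\mu_\ell)$ is still non-isotrivial (the algebraic closure is unchanged), and since $\ell \neq p$ the modular curve $X_1(\ell)$ is smooth over $\mathbb{Z}[1/\ell]$, so $X_1(\ell)_{k(\mu_\ell)}$ is a smooth projective geometrically connected curve of genus $(\ell-5)(\ell-7)/24$ for primes $\ell \geq 5$. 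Spreading out $(E,P)$ over the open subscheme of $C'$ of good reduction yields a morphism to $Y_1(\ell)_{k(\mu_\ell)}$, which extends by the valuative criterion to a morphism $f \colon C' \to X_1(\ell)_{k(\mu_\ell)}$. Non-isotriviality forces the $j$-invariant to be non-constant, so $f$ is non-constant.

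Finally I would feed this into Riemann--Hurwitz. The hypothesis $\ell > 6 + \sqrt{1+24g}$ is algebraically equivalent to $(\ell-5)(\ell-7) > 24g$, that is, $g(X_1(\ell)) > g$; this already guarantees $\ell \geq 11$ and hence $g(X_1(\ell)) \geq 1$. Applying Riemann--Hurwitz to the non-constant $f$ gives $2g - 2 \geq \deg(f)\bigl(2g(X_1(\ell)) - 2\bigr)$, which forces $g \geq g(X_1(\ell))$, contradicting the inequality above.

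The main obstacle I expect is in the modular-curve step: one must verify that $X_1(\ell)$ admits a smooth model in the possibly positive characteristic setting with the same genus as its complex analogue, and that the morphism $f$ really is non-constant (i.e.\ that non-isotriviality is preserved under the constant base change to $k(\mu_\ell)$). Both facts are standard, but the care required in positive characteristic is the only nontrivial technicality; the Galois-theoretic and Riemann--Hurwitz portions are routine.
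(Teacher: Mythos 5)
Your proof is correct and follows essentially the same strategy as the paper: your Sylow-$\ell$-subgroup observation is the content of Lemma~\ref{lemmatorsionpoint}, the constant-extension/genus-preservation step is identical, and your modular-curve and Riemann--Hurwitz argument simply unfolds the result of Levin \cite{levin}, which the paper cites as a black box for the final bound. The only point to watch (which you flag yourself) is that in positive characteristic the map $C' \to X_1(\ell)$ need not be separable, so one should either factor out the Frobenius, which preserves the genus, or replace the Riemann--Hurwitz inequality by the observation that a non-constant map of smooth projective curves induces an injection-up-to-isogeny $\mathrm{Jac}(X_1(\ell)) \hookrightarrow \mathrm{Jac}(C')$ and hence $g(X_1(\ell)) \leq g$.
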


We also produce examples (Example \ref{examplegenus0} and Example \ref{examplegenus1}) which show that Theorem \ref{maintheorem} is sharp when $g=0$ and $g=1$. In higher dimensions, we are not able to prove Conjecture \ref{conjecturefunctionfields} in full generality. However, when the characteristic of $k$ is zero, we provide evidence for our conjecture by showing that the uniform geometric torsion conjecture for abelian varieties over function fields (Conjecture \ref{uniformgeometrictorsion}) implies Conjecture \ref{conjecturefunctionfields}, see Theorem \ref{torsionconjectureimpliesconjecture} below. We also prove the following unconditional theorem.

\begin{theorem}\label{maintheorem2}
    Let $d$ be a positive integer, let $k$ be a field, and let $C/k$ be a smooth, projective, and geometrically connected curve of genus $g$ with function field $K=k(C)$. Then the following are true.
    \begin{enumerate}
        \item (Proposition \ref{propositionsemistable})  If $k$ is a finite field of characteristic $p$ and $A/K$ is an abelian variety that belongs to $\mathscr{A}'(K,d, \ell)$ for some $\ell > 2d+1$, then $A/K$ has semi-stable reduction at all places of $K$.
        \item (Theorem \ref{theoremrm}) Assume that $k$ is a subfield of $\mathbb{C}$ and let $\mathscr{A}_{\text{RM}}'(K,d, \ell)$ be the set of abelian varieties $A/K$ that belong to $\mathscr{A}'(K,d, \ell)$ and have maximal real multiplication. Then  there exists a constant $N:=N( g, d)$ (depending only on $g$ and $d$) such that if $A/K$ an abelian variety that belongs to $\mathscr{A}_{\text{RM}}'(K,d, \ell)$, then we must have that $\ell < N$.
    \end{enumerate}
\end{theorem}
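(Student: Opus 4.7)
Both parts rest on the same basic observation: since $K(\mu_\ell) = k(\mu_\ell) \cdot K$ is a constant field extension of $K$, it is unramified at every place of $C$. Consequently, for every place $v$ of $K$ the inertia subgroup $I_v \subseteq \text{Gal}(\widebar{K}/K)$ is contained in $\text{Gal}(\widebar{K}/K(\mu_\ell))$. The hypothesis $K(A[\ell^\infty]) \subseteq \Tilde{K_\ell'}$ forces the image of $\text{Gal}(\widebar{K}/K(\mu_\ell))$ in $\text{GL}(T_\ell A)$ to be a pro-$\ell$ group; in particular, the image of $I_v$ in $\text{GL}(A[\ell])$ is an $\ell$-subgroup of $\text{GL}_{2d}(\mathbb{F}_\ell)$, so every element of $I_v$ acts on $A[\ell]$ by a unipotent matrix.

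For Part (1), the plan is to upgrade this unipotence from $A[\ell]$ to $T_\ell A$ and conclude by Grothendieck's criterion for semi-stable reduction. Given $\sigma \in I_v$, Grothendieck's quasi-unipotence theorem guarantees that the eigenvalues of $\sigma$ on $V_\ell A$ are roots of unity in $\widebar{\mathbb{Q}_\ell}$. Reducing mod $\ell$ gives the eigenvalues of $\sigma$ on $A[\ell]$, all of which equal $1$, so each such eigenvalue is a root of unity reducing to $1$ in $\widebar{\mathbb{F}_\ell}$, and is therefore an $\ell^a$-th root of unity for some $a \geq 0$. Each such eigenvalue generates an extension of $\mathbb{Q}_\ell$ of degree $\phi(\ell^a) = \ell^{a-1}(\ell-1)$, which is bounded by $2d$ since the characteristic polynomial of $\sigma$ on $V_\ell A$ has degree $2d$; the hypothesis $\ell > 2d+1$ then forces $a = 0$. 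Hence $\sigma$ is unipotent on $V_\ell A$ and, since $\sigma$ preserves the lattice $T_\ell A$, on $T_\ell A$; Grothendieck's criterion yields that $A$ is semi-stable at $v$.

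For Part (2), the plan is to convert the pro-$\ell$ hypothesis into rational torsion and then apply the Bakker--Tsimerman uniform torsion bound. Let $P$ denote the image of $\text{Gal}(\widebar{K}/K(\mu_\ell))$ in $\text{GL}(A[\ell])$, which is an $\ell$-group by the initial observation. The standard orbit-counting argument for $\ell$-groups acting on $\mathbb{F}_\ell$-vector spaces gives $A[\ell]^P \neq 0$, so $A(K(\mu_\ell))$ contains a non-zero $\ell$-torsion point. Now $K(\mu_\ell)$ is the function field of $C \times_k k(\mu_\ell)$, which remains a smooth, projective, geometrically connected curve of genus $g$ defined over the subfield $k(\mu_\ell) \subseteq \mathbb{C}$, and $A_{K(\mu_\ell)}$ is still non-isotrivial and retains its maximal real multiplication. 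The Bakker--Tsimerman uniform torsion theorem for non-isotrivial abelian varieties with real multiplication over function fields then produces a constant $N = N(g,d)$ with $|A(K(\mu_\ell))_{\text{tors}}| \leq N$, forcing $\ell \leq N$. The main technical step is in Part (1), where the bound $\ell > 2d+1$ is precisely what is needed to pass from unipotence on $A[\ell]$ to unipotence on $T_\ell A$; Part (2) is a straightforward combination of the orbit-counting fixed-point argument with the geometric input of Bakker--Tsimerman.
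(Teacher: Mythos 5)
Your Part (2) is essentially the paper's argument: orbit-counting gives a $K(\mu_\ell)$-rational point of order $\ell$, and since the constant extension preserves the genus and the curve's properties, the Bakker--Tsimerman bound applies after base change. The one small imprecision is that the Bakker--Tsimerman theorem, as invoked here, is stated over $k=\mathbb{C}$, so one should extend scalars all the way to $\mathbb{C}$ (not merely to $k(\mu_\ell)$) before applying it; the $\ell$-torsion point, non-isotriviality, and real multiplication all survive that base change, so this is a trivial fix.

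Your Part (1), however, takes a genuinely different route. The paper reduces to the case $K(\mu_\ell)=K$, notes $K_v^{\mathrm{unr}}(A[\ell])/K_v^{\mathrm{unr}}$ has $\ell$-power degree, invokes Raynaud's criterion (SGA 7 I, Prop.\ 4.7) to get semi-stable reduction over $K_v^{\mathrm{unr}}(A[\ell])$, and then uses the fact that the minimal extension over which $A$ acquires semi-stable reduction has degree divisible only by primes $\le 2d+1$; since $\ell>2d+1$, that minimal extension must already be trivial. You instead argue entirely at the level of the $\ell$-adic representation: the pro-$\ell$ hypothesis forces the image of $I_v$ on $A[\ell]$ to be unipotent, Grothendieck's quasi-unipotence theorem makes the eigenvalues of $\sigma\in I_v$ on $V_\ell A$ roots of unity, reduction mod $\ell$ forces these to be $\ell$-power roots of unity, and the degree bound $\phi(\ell^a)\le 2d<\ell-1$ kills the $\ell$-part, so $\sigma$ is genuinely unipotent on $T_\ell A$ and Grothendieck's semi-stable reduction criterion concludes. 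Both approaches live in SGA 7, but yours bypasses Raynaud's theorem and the ``minimal semi-stable extension'' machinery in favor of a direct eigenvalue computation; in effect you are re-deriving the relevant special case of the degree bound cited by the paper. Your version is arguably more self-contained, while the paper's is shorter given the references it leans on. Both are correct.
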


This article is organized as follows. In Section \ref{section2} we prove Theorem \ref{maintheorem} and we show that our theorem is sharp, when the genus of $C/k$ is $0$ or $1$. Moreover, we prove part $(ii)$ of Theorem \ref{maintheorem2} and we show that the uniform geometric torsion conjecture over function fields (Conjecture \ref{uniformgeometrictorsion}) implies Conjecture \ref{conjecturefunctionfields}. Section \ref{section3} mostly concerns the case where the characteristic of $k$ is positive. After proving part $(i)$ of Theorem \ref{maintheorem2}, we prove that Conjecture \ref{conjecturefunctionfields} is true for a general class of Jacobians, and then we discuss why an analogue of Conjecture \ref{conjecturefunctionfields} for abelian varieties over finite fields does not seem to exist. Finally, in the last section, we consider the possibility of formulating a generalization of Conjecture \ref{conjecturefunctionfields} by replacing the fields $K(\mu_{\ell})$ with more general field extensions.

\begin{acknowledgement}
 This work was supported by Czech Science Foundation (GA\v CR) [21-00420M] and by Charles University Research Center program [UNCE/SCI/022]. The author would like to thank Dino Lorenzini for some useful comments on an earlier version of this manuscript. I would also like to thank the anonymous referee for providing several very useful suggestions and corrections that improved this article.
\end{acknowledgement}

\section{Proof of Theorem \ref{maintheorem}}\label{section2}

In this section, we first prove Theorem \ref{maintheorem}. Then we relate Conjecture \ref{conjecturefunctionfields} to the uniform geometric torsion conjecture over function fields. More precisely, we show that Conjecture \ref{uniformgeometrictorsion} implies Conjecture \ref{conjecturefunctionfields}. Finally, we prove part $(ii)$ of Theorem \ref{maintheorem2} using a Theorem of Bakker and Tsimerman from \cite{bakkertsimerman}.

We start our discussion with a lemma that will be very useful for our results.

\begin{lemma}\label{lemmatorsionpoint}
    Let $k$ be a perfect field of characteristic $p \geq 0$ and let $C/k$ be a smooth, projective, and geometrically connected curve of genus $g$ with function field $K=k(C)$. Let $\ell \neq p$ be a prime. If the degree of the extension $K(A[\ell])/K(\mu_{\ell})$ is equal to a power of $\ell$, then $A_{K(\mu_{\ell})}/K(\mu_{\ell})$ has a $K(\mu_{\ell})$-rational point of order $\ell$. 
\end{lemma}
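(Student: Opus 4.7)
The plan is to reduce the statement to the standard fact that a finite $\ell$-group acting on a finite $\mathbb{F}_\ell$-vector space always has a nonzero fixed vector. First, set $G := \mathrm{Gal}(K(A[\ell])/K(\mu_\ell))$. By hypothesis $|G|$ is a power of $\ell$, and since $A$ is defined over $K$, the group $G$ acts $\mathbb{F}_\ell$-linearly on the $2d$-dimensional $\mathbb{F}_\ell$-vector space $V := A[\ell]$.

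The key step is a standard orbit-counting argument for $\ell$-groups: the $G$-orbits in $V$ partition $V$ into subsets whose sizes divide $|G|$, hence are powers of $\ell$. Since $|V| = \ell^{2d}$, reducing modulo $\ell$ gives $|V^G| \equiv 0 \pmod{\ell}$. The origin $0 \in V$ is obviously fixed, so $|V^G| \geq \ell$, and in particular there exists a nonzero vector $P \in V^G$. By Galois theory, a nonzero element of $A[\ell]$ fixed by $\mathrm{Gal}(K(A[\ell])/K(\mu_\ell))$ is precisely a point of order $\ell$ that is rational over $K(\mu_\ell)$, which yields the desired $\ell$-torsion point on $A_{K(\mu_\ell)}$.

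The argument does not really use the geometry of $C/k$ or anything beyond the Galois-theoretic setup, so the only thing to verify carefully is that $G$ acts on $A[\ell]$ and that $V^G$ corresponds to $A(K(\mu_\ell))[\ell]$; both are immediate from the definition of $K(A[\ell])$ as the splitting field of the $\ell$-division polynomials of $A$. There is no real obstacle here; the lemma is essentially the observation that an $\ell$-power-index Galois extension cuts down the $\ell$-torsion representation enough to force an invariant nonzero vector.
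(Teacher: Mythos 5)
Your proof is correct and follows essentially the same route as the paper: reduce to the observation that the $\ell$-group $\mathrm{Gal}(K(A[\ell])/K(\mu_\ell))$ acting on the nontrivial $\mathbb{F}_\ell$-vector space $A[\ell]$ must fix a nonzero vector. You spell out the orbit-counting argument that the paper leaves implicit, but the underlying idea is identical.
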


\begin{proof}

    Let $A/K$ be an abelian variety such that $[K(A[\ell]):K(\mu_{\ell})]$ is equal to a power of $\ell$. We need to show that $A_{K(\mu_{\ell})}/K(\mu_{\ell})$ has a $K(\mu_{\ell})$-rational point of order $\ell$. It suffices to show that the action of the $\ell$-group $\text{Gal}(K(A[\ell])/K(\mu_{\ell}))$ on $A[\ell]$ has a non-trivial fixed point.This holds since any $\ell$-group acting on a nontrivial finite-dimensional $\mathbb{F}_\ell$-vector space fixes at least one non-zero vector.  
\end{proof}

In what follows when we refer to the genus of a function field $K$ we will mean the genus of $C/k$, where $C/k$ is a smooth, projective, and geometrically connected curve with function field $K$. We are now ready to proceed to the proof of Theorem \ref{maintheorem}. The key to our proof, and to the proofs of Theorems \ref{torsionconjectureimpliesconjecture} and \ref{theoremrm} below, is that $K(\mu_{\ell})/K$ is a constant extension and, hence, $K(\mu_{\ell})$ has the same genus as $K$.

\begin{proof}[{\it Proof of Theorem \ref{maintheorem}}]
     Let $E/K$ be a non-isotrivial abelian variety that belongs to $\mathscr{A}'(K,1, \ell)$. We will show that $\ell \leq 6 + \sqrt{1+24g}$. First, since $K(\mu_{\ell})/K$ is a constant extension, by our assumptions on $C/k$, it follows that the genus of $K(\mu_{\ell})$ is $g$. Moreover, using Lemma \ref{lemmatorsionpoint} we find that $E_{K(\mu_{\ell})}/K(\mu_{\ell})$ has a $K(\mu_{\ell})$-rational point of order $\ell$. Therefore, using work of Levin (see \cite[Page 460]{levin}) we obtain that $\ell \leq 6 + \sqrt{1+24g}$. This completes the proof of our theorem.
\end{proof}

The following conjecture, see \cite[Page 228]{cadorettamagawaweakvariant}, is called the uniform geometric torsion conjecture for abelian varieties over function fields.

\begin{conjecture}\label{uniformgeometrictorsion}
    Let $k$ be an algebraically closed field of characteristic $0$ and let $C/k$ be a smooth, projective, and geometrically connected curve of genus $g$ with function field $K=k(C)$. Then for any integer $d$, there exists a constant $N:=N(k, g, d)$ (which depends on $k$, $g$, and $d$) such that for any $d$-dimensional abelian variety $A/K$ containing no nontrivial isotrivial abelian subvariety the torsion subgroup $A(K)_{\text{tors}}$ is contained in $A[N]$.
\end{conjecture}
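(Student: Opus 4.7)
Since this final statement is an open conjecture that the paper assumes rather than proves, any proof proposal is necessarily speculative. The plan I would adopt is the moduli-theoretic strategy that underlies both special cases treated elsewhere in the paper. For each $N \geq 1$, let $\mathcal{A}_{d,N}$ denote the moduli space of $d$-dimensional principally polarized abelian varieties equipped with a distinguished point of order $N$ (plus any auxiliary level structure needed to make this a fine moduli space). Specifying a non-isotrivial $A/K$ with a $K$-rational point of order $N$ amounts essentially to giving a non-constant morphism $f\colon C^\circ \to \mathcal{A}_{d,N}$ whose image is not contained in the isotrivial locus. The conjecture is then equivalent to showing that, for $N \geq N(k,g,d)$, no such morphism exists.

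By the Lefschetz principle I would reduce to $k = \mathbb{C}$ and work analytically. The first step is to establish a uniform hyperbolicity statement: for $N \geq N(g,d)$, no non-constant morphism from a curve of genus $\leq g$ to $\mathcal{A}_{d,N}$ has image meeting the non-isotrivial locus. For $d = 1$, this is precisely the content of the Manin--Levin theorem that drives the proof of Theorem \ref{maintheorem} in the excerpt. For higher-dimensional abelian varieties with real multiplication, Bakker and Tsimerman establish the required hyperbolicity via an o-minimal analysis of the period map, and this is the input used in part (ii) of Theorem \ref{maintheorem2}. The natural next step is to combine a Griffiths-transversality or volume lower bound — which forces any non-constant $f$ to have degree growing with $N$ — with a geometric upper bound on such degrees coming from the structure of the toroidal compactification of $\mathcal{A}_d$.

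The principal obstacle, and the reason the statement remains conjectural in full generality, is that uniform hyperbolicity of $\mathcal{A}_{d,N}$ in $N$ is currently only available along special Shimura substrata such as the RM locus, where the variation of Hodge structure is particularly rigid. A general non-isotrivial family has a period map whose image need not lie in any proper special subvariety, so controlling it requires a genuinely new ingredient. One must also control semi-abelian degenerations at the boundary of the toroidal compactification uniformly in $N$, since $N$-torsion may escape to these boundary components. A plausible line of attack would be to push the Bakker--Tsimerman o-minimal machinery beyond the RM stratum by stratifying $\mathcal{A}_d$ according to the generic Mumford--Tate group of the family and treating each stratum separately, assembling the uniform bounds at the end through a careful boundary analysis.
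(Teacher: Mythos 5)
You have correctly identified that this statement is precisely what the paper labels a \emph{conjecture} (the uniform geometric torsion conjecture, attributed to Cadoret--Tamagawa), and the paper gives no proof of it: it is used only as a hypothesis in Theorem~\ref{torsionconjectureimpliesconjecture}, and its truth is established unconditionally in the paper only in the one-dimensional case (via Levin's bound, yielding Theorem~\ref{maintheorem}) and in the maximal-real-multiplication case (via Bakker--Tsimerman, yielding Theorem~\ref{theoremrm}). There is therefore no proof in the paper to compare against, and your decision to present a strategic sketch rather than a purported proof is the right call.

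Your sketch accurately reflects the state of the art and the ingredients the paper actually uses: Manin--Levin hyperbolicity of modular curves for $d=1$, Bakker--Tsimerman o-minimal hyperbolicity for the RM locus, and the moduli-theoretic reformulation in terms of non-constant maps from a fixed-genus curve into $\mathcal{A}_{d,N}$. Two small caveats worth flagging. First, the reduction to $k=\mathbb{C}$ via the Lefschetz principle is fine for the \emph{existence} of a bound, but the conjectured constant $N(k,g,d)$ is allowed to depend on $k$; if one proves it over $\mathbb{C}$ one in fact gets a bound depending only on $g$ and $d$, which is stronger than what is asked --- this is harmless but should be stated as such. Second, the hypothesis in the conjecture is that $A$ contains \emph{no nontrivial isotrivial abelian subvariety}, which is strictly stronger than $A$ being non-isotrivial; any moduli-theoretic argument has to respect this (the relevant locus to avoid is the locus of abelian varieties admitting an isotrivial isogeny factor, not merely the constant locus). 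Beyond these points, you have correctly named the genuine obstruction --- uniform-in-$N$ hyperbolicity of $\mathcal{A}_{d,N}$ is not known outside special Shimura strata --- so the sketch is honest about why the conjecture remains open.
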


Following the same strategy as in the proof of Theorem \ref{maintheorem}, we can prove the following theorem.

\begin{theorem}\label{torsionconjectureimpliesconjecture}
    Let $k$ be a field of characteristic $0$ and let $C/k$ be a smooth, projective, and geometrically connected curve of genus $g$ with function field $K=k(C)$. Then Conjecture \ref{uniformgeometrictorsion} (over $\widebar{k}$) implies Conjecture \ref{conjecturefunctionfields}.
\end{theorem}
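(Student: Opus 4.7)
The strategy is to mimic the proof of Theorem \ref{maintheorem}: use Lemma \ref{lemmatorsionpoint} to manufacture a rational torsion point, exploit the fact that the genus is preserved under constant-field extensions, and then apply the uniform geometric torsion conjecture in place of Levin's bound. Let $A \in \mathscr{A}'(K,d,\ell)$ be non-isotrivial of dimension $d$, and pass to $K' := \bar{k}(C_{\bar{k}})$, where $C_{\bar{k}} := C \times_k \bar{k}$; this is again the function field of a smooth, projective, geometrically connected curve of genus $g$ over the algebraically closed field $\bar{k}$, and it contains $K(\mu_\ell)$ since $\mu_\ell \subseteq \bar{k}$.

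The main technical point is that Conjecture \ref{uniformgeometrictorsion} requires the absence of any nontrivial isotrivial abelian subvariety, a condition a priori stronger than non-isotriviality. To handle this, I would let $B \subseteq A_{K'}$ be the maximal isotrivial abelian subvariety (the image of the $K'/\bar{k}$-trace morphism) and consider $A' := A_{K'}/B$. Classical properties of the trace, together with Poincar\'e's complete reducibility, show that $A'$ contains no nontrivial isotrivial abelian subvariety; the hypothesis that $A$ is non-isotrivial moreover forces $\dim A' \geq 1$, while trivially $\dim A' \leq d$.

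Next, the pro-$\ell$ torsion condition transfers from $A$ to $A'$. Since $K(\mu_\ell) \subseteq K'$, a standard compositum argument shows that $[K'(A[\ell]):K']$ divides $[K(A[\ell]):K(\mu_\ell)]$ and is thus a power of $\ell$, and the quotient map then forces $[K'(A'[\ell]):K'(\mu_\ell)] = [K'(A'[\ell]):K']$ to be a power of $\ell$ as well. Lemma \ref{lemmatorsionpoint} (applied with base $K'$ over $\bar{k}$) then yields a $K'$-rational point of order $\ell$ on $A'$. Invoking Conjecture \ref{uniformgeometrictorsion} over $\bar{k}$ produces a constant $N = N(\bar{k},g,d)$, independent of $A$ and $\ell$, with $A'(K')_{\text{tors}} \subseteq A'[N]$. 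For $\ell > N$ this contradicts the existence of a $K'$-rational $\ell$-torsion point on $A'$, so $\ell \leq N$, which is precisely the conclusion of Conjecture \ref{conjecturefunctionfields}.

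The main obstacle I anticipate is the rigorous handling of the trace and quotient construction in the second paragraph -- specifically, citing the existence of the maximal isotrivial abelian subvariety over $K'$ in characteristic zero and verifying that its quotient has no nontrivial isotrivial abelian subvariety. Both facts are standard consequences of Chow's $K'/\bar{k}$-trace theory combined with Poincar\'e's complete reducibility, but they need to be invoked carefully.
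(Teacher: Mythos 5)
Your proof follows the same overall strategy as the paper's: base change to $K' = \bar{k}(C_{\bar{k}})$, observe that the genus is unchanged, use Lemma \ref{lemmatorsionpoint} to produce an $\ell$-torsion point, and then invoke Conjecture \ref{uniformgeometrictorsion} to bound $\ell$. However, you have correctly identified a gap in the paper's own argument that your proof addresses. The paper applies Conjecture \ref{uniformgeometrictorsion} directly to $A_{K_{\bar{k}}}$, but the hypothesis of that conjecture is that the abelian variety contain \emph{no nontrivial isotrivial abelian subvariety}, which is strictly stronger than mere non-isotriviality: the product of a non-isotrivial elliptic curve with an isotrivial one is non-isotrivial yet contains a nontrivial isotrivial abelian subvariety. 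The paper's proof never bridges this gap.

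Your reduction step does. Taking $B \subseteq A_{K'}$ to be the maximal isotrivial abelian subvariety and passing to $A' = A_{K'}/B$ correctly produces an abelian variety with no nontrivial isotrivial subvariety (by maximality of $B$ together with Poincar\'e reducibility applied to the preimage of any isotrivial subvariety of $A'$), of dimension between $1$ and $d$ (the lower bound using that $A$ is non-isotrivial, a property insensitive to the algebraic base change $K \to K'$). Crucially, you then reapply Lemma \ref{lemmatorsionpoint} to $A'$ to produce a \emph{fresh} $\ell$-torsion point rather than trying to push forward the original one, which is the right move since the original point could lie in $B$. The transfer of the pro-$\ell$ condition is correct: $[K'(A[\ell]):K']$ divides $[K(A[\ell]):K(\mu_\ell)]$ by the compositum argument (using that the latter extension is Galois), and $A'[\ell]$ is a quotient of $A_{K'}[\ell]$ via the exact sequence $0 \to B[\ell] \to A_{K'}[\ell] \to A'[\ell] \to 0$, so $K'(A'[\ell]) \subseteq K'(A[\ell])$. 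One minor point worth making explicit: since $\dim A'$ may be any integer in $[1,d]$, you should take $N := \max_{1 \leq e \leq d} N(\bar{k}, g, e)$, but this is cosmetic. In short, your proof is not only correct but is a more complete argument than the one in the paper.
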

\begin{proof}
    Let $A/K$ be a non-isotrivial abelian variety that belongs to $\mathscr{A}'(K,d, \ell)$ and we will show that $\ell$ is bounded. Using Lemma \ref{lemmatorsionpoint} we find that $A_{K(\mu_{\ell})}/K(\mu_{\ell})$ has a $K(\mu_{\ell})$-rational point of order $\ell$. Let now $\widebar{k}$ be the algebraic closure of $k$ and let $K_{\widebar{k}}$ denote the function field of $C_{\widebar{k}}/\widebar{k}$, which has genus $g$. It follows that $A_{K_{\widebar{k}}}/K_{\widebar{k}}$ has a $K_{\widebar{k}}$-rational point of order $\ell$. Therefore, using Conjecture \ref{uniformgeometrictorsion} we obtain that $\ell \leq N$, for some constant $N$ that depends on $k$, $g$, and $d$. This proves our theorem.
\end{proof}

Recall that an abelian variety with maximal real multiplication is an abelian variety $A/K$ of dimension $d$ such that there exists an injection $\mathcal{O}_F \longrightarrow \text{End}(A)$, where $\mathcal{O}_F$ is the ring of integers of a totally real number field $F$ with $[F:\mathbb{Q}]=d$ and $\text{End}(A)$ denotes the ring of endomorphisms of $A/K$ which are defined over $K$. The following theorem is due to Bakker and Tsimerman, see \cite[Corollary B]{bakkertsimerman}.

\begin{theorem}\label{bakkertsimermanrm} 
    Fix positive  integers $d$,$g$, let $k=\mathbb{C}$, and let $K$ be the function field of a smooth, projective, and geometrically connected curve $C/k$ of genus $g$. Then there exists a constant $N:=N( g, d)$ such that for any $d$-dimensional non-isotrivial abelian variety $A/K$ with maximal real multiplication the torsion subgroup $A(K)_{\text{tors}}$ is contained in $A[N]$. 
\end{theorem}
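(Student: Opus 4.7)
The theorem is stated as a direct consequence of Corollary B of Bakker and Tsimerman \cite{bakkertsimerman}, so my plan is to invoke that result; here I sketch how I understand the underlying strategy. A $d$-dimensional abelian variety with maximal real multiplication by $\mathcal{O}_F$ (where $F$ is a totally real field of degree $d$) is parametrized by the $d$-dimensional Hilbert modular variety $\mathcal{A}_F \cong \Gamma \backslash \mathfrak{H}^d$, a locally symmetric variety. Specifying a cyclic subgroup of order $N$ yields a finite \'etale cover $\mathcal{A}_F(N) \to \mathcal{A}_F$. A non-isotrivial $A/K$ with a $K$-rational point of exact order $N$ produces, over $\mathbb{C}$, a non-constant algebraic morphism from a Zariski open subset $U \subseteq C$ into $\mathcal{A}_F(N)$, where non-constancy is precisely the translation of non-isotriviality of the family.

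The task then reduces to ruling out such morphisms once $N$ is large compared to $g$ and $d$. I would follow the hyperbolic-volume approach: the covers $\mathcal{A}_F(N)$ inherit from $\mathfrak{H}^d$ a canonical K\"ahler--Einstein metric of negative Ricci curvature, and the Ahlfors--Schwarz lemma (with its Yau-type higher-dimensional generalizations) bounds the hyperbolic volume of the image of any non-constant algebraic map $\varphi : U \to \mathcal{A}_F(N)$ in terms of $\chi(U) \leq 2g - 2 + \#(C \setminus U)$. If one can simultaneously exhibit a lower bound on the hyperbolic volume of any non-constant algebraic subvariety of $\mathcal{A}_F(N)$ that grows without bound in $N$, then comparing the two inequalities forces $N \leq N(g,d)$ and thereby bounds $A(K)_{\text{tors}}$.

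The main obstacle, and the technical heart of \cite{bakkertsimerman}, is obtaining this growth rate for the hyperbolic-volume lower bound, which amounts to quantitative control of how rapidly the tower $\{\mathcal{A}_F(N)\}_N$ becomes hyperbolic, including near the cusps. Bakker and Tsimerman handle this via the Ax--Schanuel theorem for mixed Shimura varieties together with sharp volume estimates near the boundary components of the Baily--Borel compactification; once these are in hand, feeding their output into the Ahlfors--Schwarz framework yields the uniform constant $N(g,d)$ asserted in the theorem.
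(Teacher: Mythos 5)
You correctly recognize this theorem as an imported result: the paper provides no proof of it, only the citation to \cite[Corollary B]{bakkertsimerman}, and your proposal does exactly the same, so the approaches coincide. The supplementary sketch you give of Bakker--Tsimerman's strategy is broadly in the right spirit (Hilbert modular covers with level structure, the Kodaira--Parshin-type reduction to bounding non-constant maps from punctured curves, a Schwarz-lemma upper bound on hyperbolic volume set against a lower bound that grows with the level), although the specific attribution of the volume lower bound to the Ax--Schanuel theorem for mixed Shimura varieties is not quite right --- in that paper the key technical input is rather a quantitative hyperbolicity/volume estimate near the cusps of the toroidal compactification in the spirit of Hwang--To, together with a systole-type argument; Ax--Schanuel belongs to a separate strand of Bakker--Tsimerman's work. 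In any case this imprecision has no bearing on the correctness of the citation, which is all that the paper (and your proposal) actually relies upon.
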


Recall that $\mathscr{A}_{\text{RM}}'(K,d, \ell)$ is the set of abelian varieties $A/K$ that belong to $\mathscr{A}'(K,d, \ell)$ and have maximal real multiplication.
\begin{theorem}\label{theoremrm}
    Let $k$ be a subfield of $\mathbb{C}$ and let $C/k$ be a smooth, projective, and geometrically connected curve of genus $g$ with function field $K=k(C)$. Then  there exists a constant $N:=N( g, d)$ such that if $A/K$ a $d$-dimensional abelian variety that belongs to $\mathscr{A}_{\text{RM}}'(K,d, \ell)$, then we must have that $\ell \leq N$.
\end{theorem}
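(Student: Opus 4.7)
The plan is to mimic the proof of Theorem \ref{torsionconjectureimpliesconjecture} almost verbatim, with Conjecture \ref{uniformgeometrictorsion} replaced by the unconditional input Theorem \ref{bakkertsimermanrm}. Concretely, I would start with an abelian variety $A/K$ in $\mathscr{A}_{\text{RM}}'(K,d,\ell)$. Since $K(A[\ell^{\infty}]) \subseteq \Tilde{K_{\ell}'}$, the degree $[K(A[\ell]):K(\mu_{\ell})]$ is a power of $\ell$, so Lemma \ref{lemmatorsionpoint} gives a $K(\mu_{\ell})$-rational point of order $\ell$ on $A_{K(\mu_{\ell})}$. Because $K(\mu_{\ell})/K$ is the constant extension obtained by adjoining $\mu_{\ell} \subset \widebar{k} \subset \mathbb{C}$, the function field $K(\mu_{\ell})$ is the function field of a smooth, projective, geometrically connected curve over $k(\mu_{\ell}) \subseteq \mathbb{C}$ still of genus $g$.

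Next I would base change all the way to $\mathbb{C}$: let $K_{\mathbb{C}}$ denote the function field of $C_{\mathbb{C}}/\mathbb{C}$, which is again a smooth, projective, and geometrically connected curve of genus $g$, and note $K(\mu_{\ell}) \subseteq K_{\mathbb{C}}$. The base change $A_{K_{\mathbb{C}}}$ inherits a $K_{\mathbb{C}}$-rational point of order $\ell$, and the embedding $\mathcal{O}_F \hookrightarrow \text{End}(A)$ extends, so maximal real multiplication is preserved. Theorem \ref{bakkertsimermanrm} then furnishes a constant $N=N(g,d)$ such that $A_{K_{\mathbb{C}}}(K_{\mathbb{C}})_{\text{tors}} \subseteq A[N]$, provided $A_{K_{\mathbb{C}}}$ is non-isotrivial. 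Since a nontrivial point of order $\ell$ survives, this forces $\ell \mid N$, whence $\ell \leq N$.

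The only point requiring a brief verification, and the main (minor) obstacle in carrying out the plan, is that non-isotriviality is preserved by the base change $K \rightsquigarrow K_{\mathbb{C}}$: I would note that an algebraic closure $\widebar{K_{\mathbb{C}}}$ is also an algebraic closure of $K$, so $A_{\widebar{K_{\mathbb{C}}}}$ may be identified with $A_{\widebar{K}}$, and a constant decomposition $A_{\widebar{K_{\mathbb{C}}}} \cong A_0 \times_{\mathbb{C}} \widebar{K_{\mathbb{C}}}$ would immediately descend to a constant decomposition over $\widebar{k}$ after identifying the constant factor with its image in $\widebar{k}$ via an embedding $\widebar{k} \hookrightarrow \mathbb{C}$, contradicting the hypothesis that $A/K$ is non-isotrivial. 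This disposes of the only substantive check, and the rest is the formal deduction above.
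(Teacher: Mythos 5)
Your proposal follows the paper's proof essentially verbatim: invoke Lemma \ref{lemmatorsionpoint} to get an order-$\ell$ point over $K(\mu_\ell)$, base change to $K_{\mathbb{C}}$ (preserving the genus), and apply Theorem \ref{bakkertsimermanrm}. The one place you go beyond the paper is in trying to justify that non-isotriviality survives the base change $K\rightsquigarrow K_{\mathbb{C}}$ — a point the paper in fact leaves unaddressed — and it is there that your argument has a flaw. You assert that $\widebar{K_{\mathbb{C}}}$ is an algebraic closure of $K$, but this is false whenever $\widebar{k}\subsetneq\mathbb{C}$: the extension $K_{\mathbb{C}}/K$ is transcendental (e.g.\ for $k=\mathbb{Q}$, $K=\mathbb{Q}(t)$ and $K_{\mathbb{C}}=\mathbb{C}(t)$), so $\widebar{K_{\mathbb{C}}}$ is not algebraic over $K$, and $A_{\widebar{K_{\mathbb{C}}}}$ cannot simply be ``identified with'' $A_{\widebar{K}}$. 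Likewise, the proposed ``descent'' of the constant decomposition from $\mathbb{C}$ to $\widebar{k}$ is not immediate: the constant factor $A_0$ lives a priori over $\mathbb{C}$, and one must argue that $A_0$ itself descends to $\widebar{k}$, which does not follow merely from choosing an embedding $\widebar{k}\hookrightarrow\mathbb{C}$. The conclusion — that non-isotriviality is preserved under enlarging the constant field from $\widebar{k}$ to $\mathbb{C}$ — is indeed true, but it requires a genuine descent argument (for instance via the Chow $K/\widebar{k}$-trace, which commutes with the extension of algebraically closed constant fields, or by noting that the isotriviality of the spread-out abelian scheme over an open $U\subseteq C_{\widebar{k}}$ is a condition that can be checked after the faithfully flat base change $U_{\mathbb{C}}\to U$). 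With that step repaired, the rest of your deduction is correct and coincides with the paper's.
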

\begin{proof}
    The proof is similar to the proof of Theorem \ref{torsionconjectureimpliesconjecture}. Assume that there exists a non-isotrivial abelian variety $A/K$  that belongs to $\mathscr{A}_{\text{RM}}'(K,d, \ell)$ and we will show that $\ell \leq N$ for some $N$ that depends on $g$ and $d$. Using Lemma \ref{lemmatorsionpoint} we find that $A_{K(\mu_{\ell})}/K(\mu_{\ell})$ has a $K(\mu_{\ell})$-rational point of order $\ell$. Let now $K_{\mathbb{C}}$ denote the function field of $C_{\mathbb{C}}/\mathbb{C}$, which still has genus $g$. It follows that $A_{K_{\mathbb{C}}}/K_{\mathbb{C}}$ has a $K_{\mathbb{C}}$-rational point of order $\ell$. Therefore, using Theorem \ref{bakkertsimermanrm} we obtain that $\ell \leq N$, for some constant $N$ that depends on $g$ and $d$. This proves our theorem.
\end{proof}

The following proposition can be thought of as the converse of Lemma \ref{mainlemma} for elliptic curves. We will use this proposition below to find elements of $\mathscr{A}'(K,1, \ell)$ and show that the bound provided by Theorem \ref{maintheorem} is sharp when the genus of $K$ is $0$ or $1$.

\begin{proposition}\label{proponepoint} Let $K$ be a field of characteristic $p\geq 0$, let $\ell$ be prime with $\ell \neq p$, and let $E/K$ be an elliptic curve. If the base change  $E_{K(\mu_{\ell})}/K(\mu_{\ell})$ of $E/K$ to $K(\mu_{\ell})$ has a $K(\mu_{\ell})$-rational point of order $\ell$, then $K(E[\ell^{\infty}]) \subseteq\Tilde{K_{\ell}'}.$

\end{proposition}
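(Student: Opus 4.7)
The plan is to show that the Galois group $\mathrm{Gal}(K(E[\ell^\infty])/K(\mu_\ell))$ is a pro-$\ell$ group, which by definition of $\widetilde{K_\ell'}$ yields the desired inclusion. Set $F:=K(\mu_\ell)$ and let $P\in E(F)$ be a point of order $\ell$ provided by the hypothesis. Note that by the Weil pairing we have $K(\mu_\ell)\subseteq K(E[\ell])$, so $F\cdot K(E[\ell^\infty])=K(E[\ell^\infty])$, and it suffices to analyze the $\ell$-adic representation $\rho_\ell:\mathrm{Gal}(\overline{K}/F)\to \mathrm{GL}_2(\mathbb{Z}_\ell)$ attached to $T_\ell(E)$.

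First I would look at the mod-$\ell$ reduction $\bar\rho_\ell$. Choose a basis $\{P,Q\}$ of $E[\ell]$. Since $P$ is $F$-rational, every $\sigma\in\mathrm{Gal}(\overline{K}/F)$ acts on $E[\ell]$ by a matrix of the form $\begin{pmatrix}1 & *\\ 0 & *\end{pmatrix}$. The Weil pairing identifies $\det\bar\rho_\ell$ with the mod-$\ell$ cyclotomic character, which is trivial on $\mathrm{Gal}(\overline{K}/F)$ because $F\supseteq\mathbb{Q}(\mu_\ell)$. Consequently the bottom-right entry is also $1$, so the image of $\bar\rho_\ell$ lies inside the upper unipotent subgroup $U=\left\{\begin{pmatrix}1 & *\\ 0 & 1\end{pmatrix}\right\}\subseteq \mathrm{GL}_2(\mathbb{F}_\ell)$, which is a cyclic group of order $\ell$.

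Next I would pass to the $\ell$-adic picture. The image of $\rho_\ell|_{\mathrm{Gal}(\overline{K}/F)}$ lies in the preimage $H$ of $U$ under the reduction map $\mathrm{GL}_2(\mathbb{Z}_\ell)\to \mathrm{GL}_2(\mathbb{F}_\ell)$. This preimage fits into a short exact sequence
\[
1\longrightarrow \Gamma(\ell)\longrightarrow H\longrightarrow U\longrightarrow 1,
\]
where $\Gamma(\ell)$ is the first principal congruence subgroup of $\mathrm{GL}_2(\mathbb{Z}_\ell)$, which is a well-known pro-$\ell$ group. Since $U$ is an $\ell$-group and $\Gamma(\ell)$ is pro-$\ell$, the group $H$ is pro-$\ell$, and therefore so is $\mathrm{Gal}(K(E[\ell^\infty])/F)$, which embeds into $H$. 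Thus $K(E[\ell^\infty])/F$ is a pro-$\ell$ extension of $F=K(\mu_\ell)$, giving $K(E[\ell^\infty])\subseteq \widetilde{K_\ell'}$.

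The argument contains no real obstacle; the only points that need to be stated cleanly are the use of the Weil pairing to pin down the determinant and the standard fact that $\Gamma(\ell)$ is pro-$\ell$ (valid also for $\ell=2$). Everything else is a formal consequence of these two inputs together with the hypothesis that $E$ acquires an $\ell$-torsion point already over $K(\mu_\ell)$.
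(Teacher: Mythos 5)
Your argument is correct and follows essentially the same route as the paper's: use the hypothesis together with the Weil pairing to place the mod-$\ell$ image inside the upper unipotent group $U$, and then deduce that the full image of $\mathrm{Gal}(\overline{K}/K(\mu_\ell))$ in $\mathrm{GL}_2(\mathbb{Z}_\ell)$ is pro-$\ell$. The only packaging difference is in the second step: you embed the whole group at once into the pro-$\ell$ preimage $H$ of $U$ via the sequence $1 \to \Gamma(\ell) \to H \to U \to 1$, whereas the paper goes layer by layer, showing each $K(E[\ell^{n+1}])/K(E[\ell^n])$ has $\ell$-power degree by injecting into $\mathrm{Aut}(E[\ell^{n+1}]/E[\ell^n])$; these express the same structural fact about the principal congruence filtration. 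One cosmetic slip worth fixing: the reason the mod-$\ell$ cyclotomic character is trivial on $\mathrm{Gal}(\overline{K}/F)$ is that $\mu_\ell \subseteq F = K(\mu_\ell)$ by construction, not that ``$F \supseteq \mathbb{Q}(\mu_\ell)$'' --- the latter is meaningless when $\mathrm{char}(K) = p > 0$, which the proposition allows.
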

\begin{proof}

 Assume that $E_{K(\mu_{\ell})}/K(\mu_{\ell})$ has a $K(\mu_{\ell})$-rational point $P$ of order $\ell$. We first show, using an idea from \cite[Lemma 3]{bourdon15}, that $[K(E[\ell]): K(\mu_{\ell})]$ is either $1$ or $\ell$. By choosing a basis $\{P, Q\}$ of $E[\ell]$ and considering the mod-$\ell$ Galois representation with respect to this basis we find that the Galois group $\text{Gal}(K(E[\ell])/K(\mu_{\ell}))$ is isomorphic to the group $H$ generated by the matrix $\begin{pmatrix}
    1 & a \\
    0 & 1
\end{pmatrix}$ inside the group $\text{GL}_2(\mathbb{Z}/\ell \mathbb{Z})$, for some $a \in \mathbb{F}_{\ell}$. On the other hand, the group $H$ has cardinality either $1$ or $\ell$. Therefore, the degree $[K(E[\ell]): K(\mu_{\ell})]$ is either $1$ or $\ell$.

In order to complete our proof it is enough to show that $K(E[\ell^n])/K(E[\ell])$ is an extension of degree a power of $\ell$, for every integer $n \geq 2$. We now show this. Recall that for every $n$ we have a natural injective group homomorphism $$\text{Gal}(K(E[\ell^{n+1}])/K(E[\ell^n])) \longrightarrow \text{Aut} (E[\ell^{n+1}]/E[\ell^n]), $$ where the group on the right is the group of automorphisms of $E[\ell^{n+1}]$ which fix $E[\ell^{n}]$ pointwise. Therefore, the order of the group $\text{Gal}(K(E[\ell^{n+1}])/K(E[\ell^n]))$ divides the order of the group $\text{Aut} (E[\ell^{n+1}]/E[\ell^n])$. Since $\ell \neq p$, we know that $E[\ell^n] \cong \mathbb{Z}/\ell^n \mathbb{Z} \times \mathbb{Z}/\ell^n \mathbb{Z}$ and, hence, the group $\text{Aut} (E[\ell^{n+1}]/E[\ell^n])$ has order a power of $\ell$. This proves that $K(E[\ell^{n+1}])/K(E[\ell^n])$ is an extension of degree equal to a power of $\ell$. This proves our proposition.
\end{proof}

 Let $k$ be a perfect field of characteristic $p \geq 0$. We now show that the bound provided by Theorem \ref{maintheorem} is sharp when $K= k(C)$, where $C/k$ is a smooth, projective, and geometrically connected curve of genus either $0$ or $1$. Let $\ell_0 $ be the largest prime such that $\mathscr{A}'(K,1, \ell_0) \neq \emptyset$. One may then wonder how large $\ell_0$ can be. It follows from Theorem \ref{maintheorem} that $\ell_0 \leq 7 $ if $g=0$ and that $\ell_0 \leq 11 $ if $g=1$. Using Proposition \ref{proponepoint} we find that if $E/K$ is an elliptic curve with a $K$-rational point of order $\ell \neq p$, then $E/K$ belongs to $\mathscr{A}'(K,1, \ell)$. Therefore, in order to show that Theorem \ref{maintheorem} is sharp when $g=0$ or $1$, then we just need to find examples of elliptic curves with a $K$-rational point of order $7$ and $11$, respectively. We do so in the following two examples, using (well-known) explicit equations for modular curves.

 \begin{example}\label{examplegenus0}
     Let $k$ be a perfect field of characteristic $p \neq 7$, let $K=k(t)$, and let $f \in K$ be a non-constant rational function. Consider the elliptic curve $E_f/K$ given by the following Weierstrass equation $$E_f: \; y^2+(1-f(f-1))xy-f^2(f-1)y=x^3-f^2(f-1)x^2.$$ The discriminant of the above equation is $$\Delta_f=f^7(f-1)^7(f^3-8f^2+5f+1).$$ Since $\Delta_f$ is non-zero in $K$, we see that indeed $E_f/K$ is an elliptic curve. The $c_4$-invariant of $E_f/K$ is $$c_4=(f^2-f+1)(f^6-11f^5+30f^4-15f^3-10f^2+5f+1).$$ Moreover, the curve $E_f/K$ is non-isotrvial because the $j$-invariant of $E_f/K$ is $j_{E_f}=\frac{c_4^3}{\Delta_f}$ is non-constant. Finally, since the point $(0,0)$ is a $K$-rational point of order $7$ of $E_f/K$ (see \cite[Section 4.4]{hus}), it follows from Proposition \ref{proponepoint} that the curve $E_f/K$ belongs to $\mathscr{A}'(K,1, 7)$. 
 \end{example}

\begin{example}\label{examplegenus1}
     Let $k$ be a perfect field of characteristic $p \neq 11$. Consider the modular curve $X_1(11)/k$ which parametrizes elliptic curves with torsion points of order $11$. The curve $X_1(11)/k$ is an elliptic curve which can be given by the following affine equation $$u^2+(t^2+1)u+t=0.$$ We note that the above equation is not the standard short Weierstrass equation that one can find in the LMFDB database \cite{lmfdb}, but it is an equation optimized for computations \cite{sutheroptimized}. Another equation for $X_1(11)/k$ can be found in \cite[Example III.1.1.3]{silverman2}. Let $K=k(X_1(11))=k(t,u)$ and consider, for every $n \geq 0$, the elliptic curve $E_n/K$ given by the following Weierstrass equation
     $$E_n: \; y^2+(s-rs+1)^{p^n}xy+(rs-r^2s)^{p^n}y=x^3+(rs-r^2s)^{p^n}x^2,$$
     where $r=ut+1$ and $s=-t+1.$ For every $n\geq 0$ it follows that $E_n/K$ has a $K$-rational point of order $11$, namely $(0,0)$. It follows from Proposition \ref{proponepoint} that  $E_n/K$ belongs to $\mathscr{A}'(K,1, 11)$. One can check that $E_n/K$ is non-isotrivial by computing the $j$-invariant of $E_n/K$ and checking that it is non-constant (this also follows from the general theory because $E_0/K$ comes from the universal elliptic curve with a point of order $11$ and $E_{n+1}/K$ is the base change of $E_n/K$ by the absolute Frobenius morphism of $K$). For more information on explicit equations of elliptic curves with torsion points over function fields see \cite{sutheroptimized}, \cite{sutherland} and/or \cite[Page 64]{mcdonaldthesis}. We note though that there is a typo in the equation at the bottom of Page 64 of \cite{mcdonaldthesis}.
\end{example}

\section{abelian varieties over fields of positive characteristic}\label{section3}

In this section, we first show that if an abelian variety $A/K$ belongs to $\mathscr{A}'(K,d, \ell)$ for $\ell>2d+1$, then it must have semi-stable reduction at all places of $K$. We then prove that Conjecture \ref{conjecturefunctionfields} is true for a general class of Jacobians. Finally, we consider the situation in the case where $K$ is a finite field, instead of a function field.

Before we proceed we need to recall a few basic facts concerning reduction of abelian varieties. Let $\mathcal{O}_{K_v}$ be a discrete valuation ring with valuation $v$, fraction field $K_v$, and perfect residue field $k_v$. Let $A/K_v$ be an abelian variety of dimension $d$ with N\'eron model $\mathcal{A}/\mathcal{O}_{K_v}$ (see \cite{neronmodelsbook} or \cite{neronmodelslorenzini} for the definition as well as the basic properties of N\'eron models). The special fiber $\mathcal{A}_{k_v}/k_v$ of $\mathcal{A}/\mathcal{O}_{K_v}$ is a smooth commutative group scheme. We denote by $\mathcal{A}^0_{k_v}/k_v$ the connected component of the identity of $\mathcal{A}_{k_v}/k_v$. By a theorem of Chevalley (see \cite[Theorem 1.1]{con}) we have a short exact sequence $$0\longrightarrow T \times U \longrightarrow \mathcal{A}^0_{k_v} \longrightarrow B \longrightarrow 0, $$
where $T/k_v$ is a torus, $U/k_v$ is a unipotent group, and $B/k_v$ is an abelian variety. We say that $A/K_v$ has semi-stable reduction if $\text{dim}(U)=0$. If now $k$ is a finite field of characteristic $p$ and $K=k(C)$, where $C/k$ is a smooth, projective, and geometrically connected curve, then we will say that $A/K$ has semi-stable reduction at a place $v$ if $A_{K_v}/K_v$ has semi-stable reduction. Here $K_v$ is the completion of $K$ at $v$. Moreover, we will say that $A/K$ is semi-stable if it has semi-stable reduction at every place $v$ of $K$.

\begin{proposition}\label{propositionsemistable}
    Let $k$ be a finite field of characteristic $p$ and let $C/k$ be a smooth, projective, and geometrically connected curve with function field $K=k(C)$. If $A/K$ is an abelian variety that belongs to $\mathscr{A}'(K,d, \ell)$ for some $\ell > 2d+1$, then $A/K$ is semi-stable.
\end{proposition}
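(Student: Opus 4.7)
The plan is to verify Grothendieck's semi-stable reduction criterion at every place $v$ of $K$: namely, to show that the image of the inertia subgroup $I_v$ in $\mathrm{Aut}(T_\ell A) \cong \mathrm{GL}_{2d}(\mathbb{Z}_\ell)$ is unipotent. Since $k$ has characteristic $p$, the residue characteristic at every $v$ equals $p$, and the condition $\ell \neq p$ built into the definition of $\mathscr{A}'(K,d,\ell)$ makes the $\ell$-adic Tate module an appropriate object at $v$. The first step is to exploit the constant-field nature of $K(\mu_\ell)/K$: because $k$ is finite, $\mu_\ell \subset \widebar{k}$, so $K(\mu_\ell) = k(\mu_\ell)\cdot K$ is a constant field extension and is therefore unramified at every place of $K$. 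Hence $I_v$ sits inside the subgroup of $\mathrm{Gal}(\widebar{K}/K)$ fixing $K(\mu_\ell)$, and by the hypothesis $K(A[\ell^\infty]) \subseteq \Tilde{K_{\ell}'}$, that subgroup acts on $T_\ell A$ through a pro-$\ell$ quotient. Writing $H_v$ for the image of $I_v$ in $\mathrm{GL}_{2d}(\mathbb{Z}_\ell)$, we obtain a closed pro-$\ell$ subgroup.

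Next, I would reduce to a topologically procyclic situation. The wild inertia $P_v$ is pro-$p$, so its image in the pro-$\ell$ group $H_v$ is trivial, and hence $H_v$ is a quotient of the tame inertia $I_v/P_v \cong \prod_{q\neq p}\mathbb{Z}_q(1)$. Because $H_v$ is pro-$\ell$, the prime-to-$\ell$ part of the tame inertia must die, so $H_v$ is a quotient of $\mathbb{Z}_\ell(1)$; in particular $H_v = \overline{\langle g\rangle}$ for some $g \in \mathrm{GL}_{2d}(\mathbb{Z}_\ell)$. By Grothendieck's quasi-unipotence theorem, some open subgroup of $I_v$ acts unipotently on $T_\ell A$, so $g^{\ell^n}$ is unipotent for some $n \geq 0$; every eigenvalue of $g$ in $\widebar{\mathbb{Q}}_\ell$ is thus an $\ell^n$-th root of unity.

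The key step is then a degree count on these eigenvalues. If some eigenvalue of $g$ were a primitive $\ell^k$-th root of unity with $k \geq 1$, its full $\mathrm{Gal}(\widebar{\mathbb{Q}}_\ell/\mathbb{Q}_\ell)$-orbit would have size $\ell^{k-1}(\ell-1) \geq \ell-1$ and would sit inside the multiset of $2d$ eigenvalues of $g$, forcing $\ell - 1 \leq 2d$, contradicting $\ell > 2d+1$. Hence every eigenvalue of $g$ equals $1$, so $g$ is unipotent; since the unipotent locus is closed in $\mathrm{GL}_{2d}(\mathbb{Z}_\ell)$, the whole of $H_v = \overline{\langle g\rangle}$ consists of unipotent elements. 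Grothendieck's criterion then yields semi-stable reduction of $A$ at $v$, and since $v$ was arbitrary, $A/K$ is semi-stable.

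I expect the main technical point to be carefully justifying the reduction of $H_v$ to a topologically procyclic group via the structure of tame inertia, since the entire degree-count argument depends on having a single topological generator $g$ whose eigenvalues can be pinned down. Once this procyclicity is in hand, the remainder is a direct assembly of Grothendieck's quasi-unipotence theorem and his semi-stable reduction criterion.
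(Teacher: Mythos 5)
Your proof is correct, but it takes a genuinely different route from the paper's. The paper works at the level of field extensions: it passes to $K(\mu_\ell) = K$, uses Raynaud's criterion \cite[Proposition 4.7]{sga7I} to conclude that $A$ acquires semi-stable reduction over $K_v^{unr}(A[\ell])$ (an $\ell$-power-degree extension of $K_v^{unr}$), then invokes the existence of a \emph{minimal} extension over which $A$ becomes semi-stable together with Conrad's theorem \cite[Theorem 6.8]{conradsemistablereduction} that all primes dividing the degree of this minimal extension are $\leq 2d+1$; since the minimal extension embeds in a pro-$\ell$ extension and $\ell > 2d+1$, it must be trivial. You instead work directly with the image of inertia in $\mathrm{GL}_{2d}(\mathbb{Z}_\ell)$: you use the pro-$\ell$ hypothesis together with the structure of tame inertia to show that this image is topologically procyclic and pro-$\ell$, invoke Grothendieck's quasi-unipotence to deduce the eigenvalues of a topological generator $g$ are $\ell$-power roots of unity, and then carry out the degree count $[\mathbb{Q}_\ell(\zeta_{\ell^k}):\mathbb{Q}_\ell] = \ell^{k-1}(\ell-1) > 2d$ to force all eigenvalues to equal $1$. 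Your eigenvalue count is, in essence, re-deriving the key input that the paper cites as a black box from Conrad; what your approach buys is a more self-contained argument that makes the role of the threshold $\ell > 2d+1$ transparent, at the cost of being a bit longer and requiring one to carefully justify the procyclicity of $H_v$ (which you correctly flag as the main technical point). Both proofs are sound, and the underlying mechanism is the same.
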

\begin{proof}
     Let $A/K$ be an abelian variety that belongs to $\mathscr{A}'(K,d, \ell)$, for some $\ell > 2d+1$, and we will show that it has semi-stable reduction at every finite place $v$ of $K$. Since $K(\mu_{\ell})/K$ is a constant extension, then it is an everywhere unramified extension. Moreover, since an everywhere unramified base extension does not affect whether an abelian variety is semi-stable, by considering the base change $A_{K(\mu_{\ell})}/K(\mu_{\ell})$ of $A/K$ to $K(\mu_{\ell})$, we can assume from now on that $K(\mu_{\ell})=K$ and that $K(A[\ell])/K$ is a field extension of degree equal to a power of $\ell$.
     
     Let $v$ be a place of $K$ and let $K_v$ be the completion of $K$ at $v$. We also denote by $K_v^{unr}$ the maximal unramified extension of $K_v$. Since $K(A[\ell])/K$ is a Galois extension, we find that the degree of the extension $K_v^{unr}(A[\ell])/K_v^{unr}$ divides the degree of the extension $K(A[\ell])/K$. Therefore, since $K(A[\ell])/K$ is a field extension of degree equal to a power of $\ell$, we find that the same is true for $K_v^{unr}(A[\ell])/K_v^{unr}$. It follows from a theorem due to Raynaud \cite[Proposition 4.7]{sga7I} that the base change $A_{K_v^{unr}(A[\ell])}/K_v^{unr}(A[\ell])$ of $A/K$ to $K_v^{unr}(A[\ell])$ has semi-stable reduction. On the other hand, by \cite[Theorem 3.8]{neronmodelslorenzini} there exists a minimal extension $K_{A_{K_v^{unr}}}/K_v^{unr}$ over which $A_{K_v^{unr}}/K_v^{unr}$ acquires semi-stable reduction. Moreover, every prime that divides the degree of $K_{A_{K_v^{unr}}}/K_v^{unr}$ is at most $2d+1$, see \cite[Theorem 6.8]{conradsemistablereduction}. By the minimality of $K_{A_{K_v^{unr}}}/K_v^{unr}$ we must have that $$K_{A_{K_v^{unr}}} \subseteq K_v^{unr}(A[\ell]). $$ By comparing the degrees of the extensions $K_{A_{K_v^{unr}}}/K_v^{unr}$ and $K_v^{unr}(A[\ell])/K_v^{unr}$ we find that $K_{A_{K_v^{unr}}} =K_v^{unr}$. This proves that the variety $A_K/K$ has semi-stable reduction at $v$.
     \end{proof}

    \begin{remark}
        Let $L$ be a number field and let $A/L$ be an abelian variety that belongs to the set $\mathscr{A}(L,g, \ell)$, for some prime number $\ell$. If $L(\mu_{\ell})=L$ and $\ell > 2d+1$, then exactly the same argument as in the proof of Proposition \ref{propositionsemistable} proves that $A/L$ has everywhere semi-stable reduction.
    \end{remark}

 We now prove for a general class of Jacobians over $\mathbb{F}_q(t)$ that if they belong to $\mathscr{A}'(\mathbb{F}_q(t), d, \ell)$, then $\ell=2$, thus providing additional evidence for Conjecture \ref{conjecturefunctionfields} over $\mathbb{F}_q(t)$. Let $d$ be a positive integer, let $q$ be a odd prime power and let $f(x) \in \mathbb{F}_q[x]$ be a monic square-free polynomial of degree $2d$. Consider the hyperelliptic curve $C/\mathbb{F}_q(t)$ given by the following affine equation $$C: y^2=(t-x)f(x)$$ and denote by $J_C/\mathbb{F}_q(t)$ the Jacobian of the curve $C/\mathbb{F}_q(t)$, which is a $d$-dimensional abelian variety. The following proposition is a consequence of a Theorem of Hall \cite{hall}, originally due to Jiu-Kang Yu.

\begin{proposition}
If the abelian variety $J_C/\mathbb{F}_q(t)$ belongs to $\mathscr{A}'(\mathbb{F}_q(t), d, \ell)$, then $\ell =2$.
\end{proposition}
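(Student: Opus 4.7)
The plan is to leverage the big-monodromy result of Hall \cite{hall} (originally due to Jiu-Kang Yu) for the one-parameter family of hyperelliptic Jacobians attached to $y^2=(t-x)f(x)$: for every odd prime $\ell \neq p$, the image of the geometric mod-$\ell$ Galois representation of $J_C$ is the full symplectic group $\text{Sp}_{2d}(\mathbb{F}_\ell)$ (with respect to the Weil pairing). Since geometric monodromy injects into arithmetic monodromy, the image $\rho_\ell(G_K)$ of the arithmetic mod-$\ell$ Galois representation then contains $\text{Sp}_{2d}(\mathbb{F}_\ell)$ inside the ambient group $\text{GSp}_{2d}(\mathbb{F}_\ell)$ determined by the principal polarization.

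Next, I would use that the multiplier of $\rho_\ell$ equals the mod-$\ell$ cyclotomic character $\chi_\ell$ (Weil pairing), which identifies $G_{K(\mu_\ell)}$ with $\rho_\ell^{-1}(\text{Sp}_{2d}(\mathbb{F}_\ell))$. Combined with the inclusion $\text{Sp}_{2d}(\mathbb{F}_\ell) \subseteq \rho_\ell(G_K)$, this yields
$$\text{Gal}(K(J_C[\ell])/K(\mu_\ell)) \;=\; \rho_\ell(G_K) \cap \text{Sp}_{2d}(\mathbb{F}_\ell) \;=\; \text{Sp}_{2d}(\mathbb{F}_\ell).$$

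Finally, if $J_C$ belongs to $\mathscr{A}'(\mathbb{F}_q(t), d, \ell)$, then the degree of the above extension must be a power of $\ell$. But
$$|\text{Sp}_{2d}(\mathbb{F}_\ell)| \;=\; \ell^{d^2}\prod_{i=1}^d (\ell^{2i}-1)$$
is divisible by $\ell^2-1=(\ell-1)(\ell+1)$, which is coprime to $\ell$ and strictly greater than $1$ whenever $\ell \geq 2$. This contradicts the hypothesis for every odd prime $\ell \neq p$; since the definition of $\mathscr{A}'$ already excludes $\ell=p$, the only surviving prime is $\ell=2$.

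The main (essentially only) nontrivial ingredient is invoking the correct version of the Hall/Yu theorem applicable to the family $y^2 = (t-x)f(x)$ for arbitrary monic squarefree $f \in \mathbb{F}_q[x]$ of degree $2d$, rather than merely for a generic member. Once that is in hand, the proof reduces to a short group-theoretic computation with the Weil pairing and the order of the symplectic group.
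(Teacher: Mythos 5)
Your proof is correct and takes essentially the same route as the paper: invoke the Hall/Yu big-monodromy theorem for the family $y^2=(t-x)f(x)$ to identify $\mathrm{Gal}(K(J_C[\ell])/K(\mu_\ell))$ with $\mathrm{Sp}_{2d}(\mathbb{F}_\ell)$ for odd $\ell\neq p$, then observe that its order $\ell^{d^2}\prod_{i=1}^d(\ell^{2i}-1)$ is not a power of $\ell$. The only difference is presentational: you spell out the standard reduction from geometric to arithmetic monodromy via the multiplier of the Weil pairing, whereas the paper quotes Hall's Theorem~4.1 directly as giving the Galois group over $K(\mu_\ell)$.
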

\begin{proof}
    Assume that $\ell >2$. It follows from \cite[Theorem 4.1]{hall} that the extension $K(J_C[\ell])/K(\mu_\ell)$ has Galois group $\text{Sp}(2g, \mathbb{F}_{\ell})$. On the other hand, we know that the group $\text{Sp}(2d, \mathbb{F}_{\ell})$ has order equal to $\ell^{d^2} \prod_{i=1}^d (\ell^{2i}-1).$ Therefore, the field extension $K(J_C[\ell])/K(\mu_\ell)$ cannot have order a power of $\ell$ and, hence, the variety $J_C/K$ does not belong to $\mathscr{A}'(\mathbb{F}_q(t), d, \ell)$. This proves our proposition.
\end{proof}

We end this section by discussing why a similar analogue of the Rasmussen Tamagawa conjecture does not seem to exist for abelian varieties over finite fields. Let $\mathbb{F}_q$ be a finite field with $q$ elements, where $q$ is a power of a prime $p$. We fix an algebraic closure $\widebar{\mathbb{F}_q}$ of $\mathbb{F}_q$ and for every integer $m \geq 1$ we denote by $\mathbb{F}_{q^m}$ the unique subfield of $\widebar{\mathbb{F}_q}$ that has $q^m$ elements. 

We first recall some necessary background on abelian varieties over finite fields. The reader is referred to \cite[Chapter II]{milneabelianvarieties} for more information. Let $A/\mathbb{F}_q$ be a $d$-dimensional abelian variety and denote by $\phi$ the Frobenius endomorphism of $A/\mathbb{F}_q$. Recall that the characteristic polynomial of $\phi$, denoted by $P_{\phi}(x)$, is a monic polynomial of degree $2d$ which belongs to $\mathbb{Z}[x]$. Write $P_{\phi}(x)=\prod_{i=1}^{2d} (x-\alpha_i)$, where $\alpha_1,...,\alpha_{2d} \in \mathbb{C}$ are the roots of $P_{\phi}(x)$ (not necessarily distinct). For every integer $m \geq 1$ we know the following equality 
\begin{align}\label{equationpointsfinitefields}
    \# A(\mathbb{F}_{q^m})=\prod_{i=1}^{2d} (1-\alpha_i^m).
\end{align}

If $A/\mathbb{F}_q$ is an abelian variety and $\ell \neq p$, then, using an argument similar to the second paragraph of the proof of Proposition \ref{proponepoint}, we find that $\mathbb{F}_q(A[\ell^{\infty}])$ is contained in the maximal pro-$\ell$ extension of $\mathbb{F}_q$ if and only if the field extension $\mathbb{F}_q(A[\ell])/\mathbb{F}_q(\mu_{\ell})$ has degree a power of $\ell$. Let now $f$ be the smallest integer such that $q^f \equiv 1 \: (\text{mod } \ell ) $. The degree of the extension $\mathbb{F}_q(\mu_{\ell})/\mathbb{F}_q$ is equal to $f$ and, hence, we have that $\mathbb{F}_q(\mu_{\ell})= \mathbb{F}_{q^f}$. We now want to examine when $[\mathbb{F}_q(A[\ell]):\mathbb{F}_q(\mu_{\ell})]=\ell^e$, for some $e \geq 0$, i.e., when $\mathbb{F}_q(A[\ell])= \mathbb{F}_{q^{f\ell^e}}$. Using Equation $($\ref{equationpointsfinitefields}$)$, we find that $$A(\mathbb{F}_{q^{f\ell^e}})=\prod_{i=1}^{2d} (1-\alpha_i^{f\ell^e}).$$ Since $\ell^2$ divides $\# A(\mathbb{F}_q(A[\ell]))$, if $ A(\mathbb{F}_q(A[\ell]))= A(\mathbb{F}_{q^{f\ell^e}}) $ for some $e \geq 0$, then we find that $\ell^2$ must divide the product $\prod_{i=1}^{2d} (1-\alpha_i^{f\ell^e})$. Therefore, whether $\mathbb{F}_q(A[\ell^{\infty}])$ is contained in the maximal pro-$\ell$ extension of $\mathbb{F}_q$ depends on the roots of the characteristic polynomial of the Frobenius endomorphism of $A/\mathbb{F}_q$.

To be more concrete, assume that $q=p \geq 5$ is a prime, that $\ell >p$, and that our abelian variety is a supersingular elliptic curve which we denote by $E/\mathbb{F}_p$. In this case, the polynomial $P_{\phi}(x)$ is a quadratic polynomial with roots $\alpha_1$ and $\alpha_2$. Since $E/\mathbb{F}_p$ is supersingular, we have that $\# E(\mathbb{F}_p)=p+1$ and, hence, using Equation $($\ref{equationpointsfinitefields}$)$ (or \cite[Theorem V.2.3.1]{aec}) we find that $\alpha_1=-\alpha_2$. This implies, by \cite[Theorem V.2.3.1]{aec}, that 
\begin{align*} \#E(\mathbb{F}_{p^{f\ell^e}})=
    \begin{cases}
        p^{f\ell^e}+1, & \text{if } f \text{ is odd} \\
        (p^{\frac{f\ell^e}{2}}-(-1)^{\frac{f\ell^e}{2}})^2, & \text{if } f \text{ is even}
    \end{cases}
\end{align*}

We now show that in this case, whether $\mathbb{F}_p(E[\ell^{\infty}])$ is contained in the maximal pro-$\ell$ extension of $\mathbb{F}_p$ depends on $f$. It follows from the Weil pairing, see \cite[Proposition 1.11]{masterthesis} for a self-contained proof, that if $\ell$ divides $\# E(\mathbb{F}_{p^{f\ell^e}})$, for some $e \geq 0$, then the extension $\mathbb{F}_p(E[\ell])/\mathbb{F}_{q^f}$ has degree equal to a power of $\ell$. Therefore, using a similar argument as in the second paragraph of the proof of Proposition \ref{proponepoint} we find that if $\ell$ divides $\# E(\mathbb{F}_{p^{f\ell^e}})$, then $\mathbb{F}_p(E[\ell^{\infty}])$ is contained in the maximal pro-$\ell$ extension of $\mathbb{F}_p$.

Assume that $f$ is even. We will show that if $f \equiv 0 \: (\text{mod } 4 ) $, then $\ell$ does not divide $\# E(\mathbb{F}_{p^{f\ell^e}})$ and that if $f \equiv 2 \: (\text{mod } 4 ) $, then $\ell$ does divide $\# E(\mathbb{F}_{p^{f\ell^e}})$. Since $f$ is even, we see that $\#E(\mathbb{F}_{p^{f\ell^e}})  = (p^{\frac{f\ell^e}{2}}-(-1)^{\frac{f\ell^e}{2}})^2$. Therefore, we have that$$\#E(\mathbb{F}_{p^{f\ell^e}}) \equiv ((p^{\ell^e})^{\frac{f}{2}}-((-1)^{\ell^e})^{\frac{f}{2}})^2 \equiv (p^\frac{f}{2}-(-1)^\frac{f}{2})^2  \; (\text{mod } \ell).$$ On the other hand, since $f$ is the smallest integer such that $p^f \equiv 1 \: (\text{mod } \ell ) $, we find that $p^\frac{f}{2} \equiv -1 \: (\text{mod } \ell ) $. Consequently, if $f \equiv 2 \: (\text{mod } 4 ) $, then $\#E(\mathbb{F}_{p^{f\ell^e}}) \equiv (-1+1)^2 \equiv 0 \: (\text{mod } \ell ) $. On the other hand, if $f \equiv 0 \: (\text{mod } 4 ) $, then $\#E(\mathbb{F}_{p^{f\ell^e}}) \equiv (-1-1)^2 \equiv 4 \: (\text{mod } \ell ) $ and, hence, $\ell$ does not divide $\#E(\mathbb{F}_{p^{f\ell^e}})$.

Finally we show that if $f$ is odd, then $\ell$ does not divide $\# E(\mathbb{F}_{p^{f\ell^e}})$. Indeed, since $f$ is odd, we see that $\#E(\mathbb{F}_{p^{f\ell^e}})  = p^{f\ell^e}+1$. Therefore, we have that $$\#E(\mathbb{F}_{p^{f\ell^e}}) \equiv (p^{\ell^e})^{f}+1 \equiv p^f+1 \equiv 2 \; (\text{mod } \ell),$$ where the last congruence is true because $p^f \equiv 1 \: (\text{mod } \ell ) $.

\begin{remark}
    For this remark we keep the same notation as in the previous paragraphs. A natural question is whether for a fixed prime $p$, one can find infinitely many primes $\ell$ for which $f \equiv 1 \: (\text{mod } 4 ) $ (or $f \equiv 2 \: (\text{mod } 4 ) $, or $f \equiv 0 \: (\text{mod } 4 ) $). The answer to this question is positive and it follows from \cite{lombardoperucca} and \cite{perucca}. More precisely, the density of primes $p$ such that $f \equiv 1 \: (\text{mod } 4 ) $,   $f \equiv 2 \: (\text{mod } 4 ) $ and $f \equiv 0 \: (\text{mod } 4 ) $) are the quantities $D_2(q,0)$, $D_2(q,1)$, and $1-D_2(q,0)-D_2(q,1)$ in \cite{perucca}, respectively (note that in \cite{perucca} the letter $p$ is used for what we call $\ell$ here). Moreover, it follows from \cite[Table 1]{perucca} that all these densities are positive (see also \cite[Remark 14]{perucca} and \cite[Theorem 17]{perucca}). Finally, we note that \cite[Theorem 1]{lombardoperucca} is more general version of these results.
\end{remark}

\section{a possible generalization}

Let $k$ be a perfect field of characteristic $p \geq 0$ and let $C/k$ be a smooth, projective, and geometrically connected curve with function field $K=k(C)$. One may wonder whether we can replace the extensions $K(\mu_{\ell})/K$ with more general (possibly ramified) extensions in Conjecture \ref{conjecturefunctionfields}. More precisely, for every prime $\ell \neq p$ let $M_{\ell}/K$ be a finite extension and for each $M_{\ell}$ consider the maximal pro-$\ell$ extension $M_{\ell}^{\text{pro}-\ell}$ of $M_{\ell}$. One may now wonder for which $M_{\ell}$ would the statement of Conjecture \ref{conjecturefunctionfields} still make sense after replacing $\Tilde{K_{\ell}'}$ with $M_{\ell}^{\text{pro}-\ell}$. 

As an example, we show below that if for every $\ell$ we take $M_{\ell}/K$ to be a Galois extension of degree dividing $\ell-1$, then we still have that the variant of Conjecture \ref{conjecturefunctionfields} is true for elliptic curves.

\begin{theorem}
Let $k$ be a perfect field and let $C/k$ be a smooth, projective, and geometrically connected curve of genus $g$ with function field $K=k(C)$. Let $E/K$ be a non-isotrivial elliptic curve, let $\ell$ be a prime number, and let $L/K$ be a Galois extension of degree dividing $\ell-1$. If the degree of the extension $K(E[\ell])/L$ is a power of $\ell$, then $\ell \leq 49 \max \{ 1, g\}$.
\end{theorem}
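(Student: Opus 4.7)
The plan is to convert the hypothesis on field degrees into a Galois-stable line in $E[\ell]$, yielding a $K$-rational cyclic subgroup of $E$ of order $\ell$, and then to bound $\ell$ via the genus of the modular curve $X_0(\ell)$. The key observation is that Galois-ness of $L/K$ propagates normality to the mod-$\ell$ image, while the $\ell$-power degree hypothesis forces the corresponding subgroup of $GL_2(\mathbb{F}_\ell)$ to be an $\ell$-group, which in $GL_2(\mathbb{F}_\ell)$ has a very rigid structure.

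Concretely, I would let $\rho\colon \mathrm{Gal}(\widebar{K}/K) \to GL_2(\mathbb{F}_\ell)$ be the mod-$\ell$ representation of $E$ and set $G := \rho(\mathrm{Gal}(\widebar{K}/K)) = \mathrm{Gal}(K(E[\ell])/K)$ and $H := \rho(\mathrm{Gal}(\widebar{K}/L))$. Since $L/K$ is Galois, $H$ is normal in $G$, and the hypothesis that $[K(E[\ell]):L] = |H|$ is a power of $\ell$ makes $H$ an $\ell$-subgroup of $G \subseteq GL_2(\mathbb{F}_\ell)$. Every $\ell$-subgroup of $GL_2(\mathbb{F}_\ell)$ lies in a Sylow $\ell$-subgroup, which has order $\ell$ (conjugate to the upper-unipotent subgroup) and fixes a unique line in $E[\ell]$. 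When $H \neq 1$, $H$ therefore pins down this line $V$; normality of $H$ in $G$ forces $G$ to preserve the fixed-point set of $H$, hence to preserve $V$, so $V$ defines a $K$-rational cyclic subgroup of $E$ of order $\ell$. This furnishes a $K$-point on $X_0(\ell)$, and hence a morphism $\pi \colon C \to X_0(\ell)$ which is non-constant because $E/K$ is non-isotrivial (composing $\pi$ with $X_0(\ell) \to X(1)$ recovers the non-constant $j$-invariant of $E$). Riemann--Hurwitz now gives $g \geq g(X_0(\ell))$, and the classical estimate $g(X_0(\ell)) \geq (\ell - 13)/12$ for primes $\ell \geq 5$ yields $\ell \leq 12g + 13$, which is comfortably below $49 \max\{1, g\}$ for every $g \geq 0$.

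The main obstacle is the degenerate case $H = 1$ (equivalently, $L = K(E[\ell])$), in which $|G|$ divides $\ell - 1$ and $G$ is an $\ell'$-subgroup of $GL_2(\mathbb{F}_\ell)$. Using the standard classification of such subgroups (Borel, split and non-split Cartan, their normalizers, and a handful of exceptional projective subgroups which force $\ell$ to be bounded), a base change $K'/K$ of degree at most $2$ brings the image of $G$ into a Borel; the resulting curve has genus at most $2g+1$ by Riemann--Hurwitz and carries a $K'$-rational cyclic subgroup of order $\ell$. Running the $X_0(\ell)$-argument over $K'$ then gives $\ell \leq 12(2g+1) + 13 = 24g + 25$, still inside the stated bound $49 \max\{1, g\}$.
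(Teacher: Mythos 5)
Your treatment of the case $H \neq 1$ is in substance the paper's key lemma: you extract the $G$-stable line $E[\ell]^H$ by observing that $H$ is a nontrivial normal $\ell$-subgroup of $G$, and such subgroups of $GL_2(\mathbb{F}_\ell)$ have one-dimensional fixed space. The difference is that you finish with a direct Riemann--Hurwitz argument against $g(X_0(\ell)) \geq (\ell-13)/12$, getting $\ell \leq 12g+13$, whereas the paper invokes Griffon--Pazuki (Proposition 6.5) for the stated bound $49\max\{1,g\}$. Your route is more self-contained and in fact yields a sharper constant, so this part is fine.

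The degenerate case $H = 1$, however, contains a genuine gap. You assert that the exceptional projective subgroups of $GL_2(\mathbb{F}_\ell)$ ``force $\ell$ to be bounded,'' and that a degree-$\leq 2$ base change always brings $G$ into a Borel. Both assertions fail. Take $\ell \equiv 1 \pmod{24}$; then $\tilde{A_4} \cong SL_2(\mathbb{F}_3)$ embeds irreducibly in $GL_2(\mathbb{F}_\ell)$, has order $24 \mid \ell-1$, has projective image $A_4$, and has abelianization $\mathbb{Z}/3\mathbb{Z}$, so it possesses no index-$2$ subgroup at all (let alone a reducible one). Such primes $\ell$ are unbounded, so your reduction to a Borel by a quadratic base change is not available and your dismissal of the exceptional groups does not hold. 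The paper avoids the classification entirely with a one-line diagonalizability argument: $E[\ell]^N$ is nonzero and $G$-stable because $N$ is a normal $\ell$-group, and the quotient $\Delta = G/N$ has exponent dividing $\ell-1$, so a generator acts on $E[\ell]^N$ by a matrix whose minimal polynomial divides $x^{\ell-1}-1$ and therefore splits over $\mathbb{F}_\ell$; an eigenline is then $G$-stable. To repair your version you would either need to adopt that linear-algebra step or separately rule out exceptional mod-$\ell$ image for non-isotrivial $E/K$ with $\ell$ large, which is not automatic and requires an independent argument.
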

\begin{proof}

Let $K$ be as in the theorem and let $E/K$ be a non-isotrivial elliptic curve. We will show that if the degree of the extension $K(E[\ell])/L$ is a power of $\ell$, then $\ell \leq 49 \max \{ 1, g\}$. The main tool in our proof is the following lemma.

\begin{lemma}\label{mainlemma}
The $\mathbb{F}_{\ell}$-vector space $E[\ell]$ has a cyclic $\text{Gal}(K^{\text{sep}}/K)$-stable subgroup.
\end{lemma}
\begin{proof}[{\it Proof of the lemma}]
The strategy for the proof of the lemma is inspired by the arguments of the proof of \cite[Lemma 3]{rt08}. We include all the relevant details in order to make our proof self-contained.

Let $G=\text{Gal}(K(E[\ell])/K)$, let $N=\text{Gal}(K(E[\ell])/L)$, and let $\Delta=G/N=\text{Gal}(L/K)$. By assumption, the group $N$ is a finite $\ell$-group. Therefore, each of the orbits of $E[\ell]$ under $N$ must have order equal to a power of $\ell$. This implies that the subspace  $E[\ell]^N$ of fixed points of $E[\ell]$ under the action of $N$ is not equal to $\{ 0 \}$. Indeed, if $E[\ell]^N = \{ 0 \}$, then we would have that $\ell$ divides $\#E[\ell]-1$ which is not possible. 

If $\Delta=G/N$ is trivial, which occurs when $L=K$, then we have that $E[\ell]^G=E[\ell]^N \neq   \{ 0 \}$. Therefore, either the action of $G$ on $E[\ell]$ is trivial or $E[\ell]^G$ is a subspace of dimension $1$. Hence, either the action of $\text{Gal}(\widebar{K}/K)$ on $E[\ell]$ is trivial or $E[\ell]^{\text{Gal}(K^{\text{sep}}/K)}=E[\ell]^G$ is a subspace of dimension $1$. This proves our lemma in the case where $\Delta$ is trivial. Therefore, we can assume from now on that $\Delta \neq \{ 0 \}$.

Assume from now on that $\Delta$ is not trivial. Since $N$ is a normal subgroup of $G$, it follows that $E[\ell]^N $ is $G$-stable (as a set). Consequently, there is a well defined action of $\Delta=G/N$ on $E[\ell]^N $. Fix a basis of $E[\ell]^N $ and consider the associated representation $\rho: \Delta \longrightarrow \text{GL}(E[\ell]^N)$. If $\delta$ is a generator of $\Delta$, since $\delta^{\ell-1}=1$, we have that $\rho(\delta)^{\ell-1}=I$. Therefore, the minimal polynomial of $\rho(\delta)$ splits completely over $\mathbb{F}_{\ell}$ and, hence, the matrix $\rho(\delta)$ has an eigenvalue $\lambda$ in $\mathbb{F}_{\ell}^{\times}$. Let $W$ be the subspace generated by an eigenvector with respect to $\lambda$. Then $\Delta$ fixes $W$ (as a set), and since $W \subseteq E[\ell]^N $, we see that $W$ is $G$-stable. This proves that $W$ is a $\text{Gal}(K^{\text{sep}}/K)$-stable subgroup of $E[\ell]$. This proves the lemma.
\end{proof}

We are now ready to complete our proof. By Lemma \ref{mainlemma} we have that there exists a cyclic, $\text{Gal}(K^{\text{sep}}/K)$-stable subgroup of $E$ which has order $\ell$. However, since $E/K$ is non-isotrivial and $\ell \neq p$ by assumption, it follows from \cite[Proposition 6.5]{griffonpazuki} that $\ell \leq 49\max \{ 1, g\}$, where $g$ is the genus of the curve $C/k$. This proves our theorem.
\end{proof}

\begin{corollary}
    Let $k$ be a perfect field and let $C/k$ be a smooth, projective, and geometrically connected curve of genus $g$ with function field $K=k(C)$. For every prime $\ell$ let $M_{\ell}/K$ be a Galois extension of degree dividing $\ell-1$ and consider the set $\mathscr{A}_{M_{\ell}}'(K,1, \ell)$ which consists of all non-isotrivial elliptic curves $E/K$ such that $K(E[\ell^{\infty}]) \subseteq M_{\ell}^{\text{pro}-\ell}.$  Then we have that $\mathscr{A}_{M_{\ell}}'(K,1, \ell) = \emptyset$ for $\ell > 49 \max \{ 1, g\}$. 
\end{corollary}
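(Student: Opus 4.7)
The plan is to deduce this corollary directly from the preceding theorem by producing, from the data of $E/K$ and $M_\ell/K$, a Galois subextension $L/K$ of $K(E[\ell])/K$ of degree dividing $\ell-1$ for which $K(E[\ell])/L$ has $\ell$-power degree. Once such an $L$ is in hand, the theorem hands us $\ell \leq 49\max\{1,g\}$ immediately.

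First I would fix a non-isotrivial elliptic curve $E/K$ in $\mathscr{A}_{M_\ell}'(K,1,\ell)$, so that $K(E[\ell]) \subseteq M_\ell^{\text{pro}-\ell}$, and set $L := M_\ell \cap K(E[\ell])$. Since both $M_\ell/K$ and $K(E[\ell])/K$ are Galois over $K$, their intersection $L/K$ is Galois, and $[L:K]$ divides $[M_\ell:K]$, which in turn divides $\ell-1$ by hypothesis. Moreover $L \subseteq K(E[\ell])$ by construction, so the theorem is applicable as soon as I can verify the $\ell$-power condition.

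Next I would form the compositum $F := K(E[\ell]) \cdot M_\ell$, which lies inside $M_\ell^{\text{pro}-\ell}$, and observe that $F/M_\ell$ is a finite subextension of a pro-$\ell$ extension, so $[F:M_\ell]$ is a power of $\ell$. The standard Galois-theoretic isomorphism $\text{Gal}(K(E[\ell])/L) \cong \text{Gal}(F/M_\ell)$, which is valid because both $K(E[\ell])/K$ and $M_\ell/K$ are Galois, then shows that $[K(E[\ell]):L]$ itself is an $\ell$-power. Applying the preceding theorem to the triple $(E,\ell,L)$ yields the bound $\ell \leq 49\max\{1,g\}$, completing the argument.

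There is no real obstacle: the argument is a short Galois-theoretic repackaging of the hypotheses so as to match those of the theorem, and both the "intersection of Galois extensions is Galois" fact and the compositum/intersection isomorphism are entirely standard.
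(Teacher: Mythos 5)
The paper states the corollary without a separate proof, leaving it as an immediate consequence of the theorem, and your argument supplies exactly the missing Galois-theoretic reduction. Setting $L := M_\ell \cap K(E[\ell])$, noting that $L/K$ is Galois of degree dividing $\ell-1$ (as an intersection of Galois extensions, with $[L:K] \mid [M_\ell:K] \mid \ell-1$), and using the compositum--intersection isomorphism $\mathrm{Gal}(K(E[\ell])\cdot M_\ell/M_\ell) \cong \mathrm{Gal}(K(E[\ell])/L)$ together with the fact that $K(E[\ell])\cdot M_\ell$ is a finite Galois subextension of the pro-$\ell$ extension $M_\ell^{\mathrm{pro}\text{-}\ell}/M_\ell$, you land precisely in the hypotheses of the theorem. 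One small point worth making explicit: the degree statement requires $F := K(E[\ell])\cdot M_\ell$ to be Galois over $M_\ell$ so that $\mathrm{Gal}(F/M_\ell)$ is a quotient of the pro-$\ell$ group $\mathrm{Gal}(M_\ell^{\mathrm{pro}\text{-}\ell}/M_\ell)$, and this holds because $F/K$ is a compositum of two Galois extensions of $K$. Your proof is correct and is the intended one-line derivation.
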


\bibliographystyle{plain}
\bibliography{bibliography.bib}

\end{document}